\newtheorem{proposition}{Proposition}
\title{Partition-Based Formulations for Mixed-Integer Optimization of Trained ReLU Neural Networks}
\author{%
  Calvin Tsay \\
  Department of Computing \\
  Imperial College London \\
  \texttt{c.tsay@imperial.ac.uk} \\
  \And
  Jan Kronqvist  \\
  Department of Mathematics \\
  KTH Royal Institute of Technology \\
  \texttt{jankr@kth.se} \\
  \AND
  Alexander Thebelt \\
  Department of Computing \\
  Imperial College London \\
  \texttt{alexander.thebelt18@imperial.ac.uk} \\
  \And
  Ruth Misener \\
  Department of Computing \\
  Imperial College London \\
  \texttt{r.misener@imperial.ac.uk} \\
}
\begin{document}

\maketitle

\begin{abstract}
This paper introduces a class of mixed-integer formulations for trained ReLU neural networks.
The approach balances model size and tightness by partitioning node inputs into a number of groups and forming the convex hull over the partitions via disjunctive programming. 
At one extreme, one partition per input recovers the convex hull of a node, i.e., the tightest possible formulation for each node. 
For fewer partitions, we develop smaller relaxations that approximate the convex hull, and show that they outperform existing formulations. 
Specifically, we propose strategies for partitioning variables based on theoretical motivations and validate these strategies using extensive computational experiments. 
Furthermore, the proposed scheme complements known algorithmic approaches, e.g., optimization-based bound tightening captures dependencies within a partition. 
\end{abstract}

\section{Introduction}

Many applications use mixed-integer linear programming (MILP) to optimize over trained feed-forward ReLU neural networks (NNs) \citep{botoeva2020efficient, dutta2018output,grimstad2019relu, lomuscio2017approach, tjeng2017evaluating,wu2020scalable}. A MILP encoding of a ReLU-NN enables network properties to be rigorously analyzed, e.g., maximizing a neural acquisition function \cite{volpp2019meta} or verifying robustness of an output (often classification) within a restricted input domain \cite{bunel2017unified}. 
MILP encodings of ReLU-NNS have also been used to determine robust perturbation bounds \cite{cheng2017maximum}, compress NNs \cite{serra2020lossless}, count linear regions \cite{serra2018bounding}, and find adversarial examples \cite{fischetti2018deep}. The so-called \textit{big-M} formulation is the main approach for encoding NNs as MILPs in the above references. 
Optimizing the resulting MILPs remains challenging for large networks, even with state-of-the-art software.

Effectively solving a MILP hinges on the strength of its continuous relaxation \cite{conforti2014integer}; weak relaxations can render MILPs computationally intractable. 
For NNs, \citet{anderson2020strong} showed that the big-M formulation is not tight and presented formulations for the convex hull (i.e., the tightest possible formulation) of individual nodes.
However, these formulations require either an exponential (\textit{w.r.t.} inputs) number of constraints or many additional/auxiliary variables. 
So, despite its weaker continuous relaxation, the big-M formulation can be computationally advantageous owing to its smaller size. 

Given these challenges, we present a novel class of MILP formulations for ReLU-NNs. 
The formulations are hierarchical: their relaxations start at a big-M equivalent and converge to the convex hull. 
Intermediate formulations can closely approximate the convex hull with many fewer variables/constraints. 
The formulations are constructed by viewing each ReLU node as a two-part disjunction. 
\citet{kronqvist2021} proposed hierarchical relaxations for general disjunctive programs.
This work develops a similar hierarchy to construct strong and efficient MILP formulations specific to ReLU-NNs. 
In particular, we partition the inputs of each node into groups and formulate the convex hull over the resulting groups. 
With fewer groups than inputs, this approach results in MILPs that are smaller than convex-hull formulations, yet have stronger relaxations than big-M. 

Three optimization tasks evaluate the new formulations: \textit{optimal adversarial examples}, \textit{robust verification}, and $\ell_1$\textit{-minimally distorted adversaries}.
Extensive computation, including with convex-hull-based cuts, shows that our formulations outperform the standard big-M approach with 25\% more problems solved within a 1h time limit (average 2.2X speedups for solved problems).  

\textbf{Related work}. Techniques for obtaining strong relaxations of ReLU-NNs include linear programming \cite{ehlers2017formal, weng2018towards, wong2018provable}, semidefinite programming \cite{dathathri2020enabling,NEURIPS2018_29c0605a}, Lagrangian decomposition \cite{bunel2020lagrangian}, combined relaxations \cite{singh2019boosting}, and relaxations over multiple nodes \cite{singh2019beyond}. 
Recently, \citet{tjandraatmadja2020} derived the tightest possible convex relaxation for a single ReLU node by considering its multivariate input space. 
These relaxation techniques do not exactly represent ReLU-NNs, but rather derive valid bounds for the network in general. 
These techniques might fail to verify some properties, due to their non-exactness, but they can be much faster than MILP-based techniques.
Strong MILP encoding of ReLU-NNs was also studied by \citet{anderson2020strong}, and a related dual algorithm was later presented by \citet{de2021scaling}. Our approach is fundamentally different, as it constructs computationally cheaper formulations that approximate the convex hull, and we start by deriving a stronger relaxation instead of strengthening the relaxation via cutting planes. 
Furthermore, our formulation enables input node dependencies to easily be incorporated. 

\textbf{Contributions of this paper.}
We present a new class of strong, yet compact, MILP formulations for feed-forward ReLU-NNs in Section \ref{sec:formulation}. 
Section \ref{sec:obtaining_and_tightening_bounds} observes how, in conjunction with optimization-based bound tightening, partitioning input variables can efficiently incorporate dependencies into MILP formulations.
Section \ref{sec:propositions} builds on the observations of \citet{kronqvist2021} to prove the hierarchical nature of the proposed ReLU-specific formulations, with relaxations spanning between big-M and convex-hull formulations. 
Sections \ref{sec:partitioning}--\ref{sec:partitioningStrategies} show that formulation tightness depends on the specific choice of variable partitions, and we present efficient partitioning strategies based on both theoretical and computational motivations. 
The advantages of the new formulations are demonstrated via extensive computational experiments in Section \ref{sec:results}. 

\section{Background}

\subsection{Feed-forward Neural Networks}
A feed-forward neural network (NN) is a directed acyclic graph with nodes structured into $k$ layers. 
Layer $k$ receives the outputs of nodes in the preceding layer $k-1$ as its inputs (layer $0$ represents inputs to the NN). 
Each node in a layer computes a weighted sum of its inputs (known as the \textit{preactivation}), and applies an activation function. 
This work considers the ReLU activation function:
\begin{align}
    y &= \text{max}(0, \boldsymbol{w}^T \boldsymbol{x} + b) \label{eq:relu}
\end{align}
where $\boldsymbol{x} \in \mathbb{R}^\eta$ and $y \in [0, \infty)$ are, respectively, the inputs and output of a node ($ \boldsymbol{w}^T \boldsymbol{x} + b$ is termed the preactivation). 
Parameters $\boldsymbol{w} \in  \mathbb{R}^{\eta}$ and $b \in \mathbb{R}$ are the node weights and bias, respectively. 

\subsection{ReLU Optimization Formulations}
In contrast to the training of NNs (where parameters $\boldsymbol{w}$ and $b$ are optimized), optimization over a NN seeks extreme cases for a \textit{trained} model. Therefore, model parameters $(\boldsymbol{w}, b)$ are fixed, and the inputs/outputs of nodes in the network $(\boldsymbol{x}, y)$ are optimization variables instead. 

\textbf{Big-M Formulation.} 
The ReLU function \eqref{eq:relu} is commonly modeled \cite{cheng2017maximum, lomuscio2017approach}:
{\allowdisplaybreaks
\begin{align}
    y &\geq (\boldsymbol{w}^T \boldsymbol{x} + b) \label{eq:bigm1} \\
    y &\leq (\boldsymbol{w}^T \boldsymbol{x} + b) - (1-\sigma)\mathit{LB}^0 \\
    y &\leq \sigma \mathit{UB}^0 \label{eq:bigm2}
\end{align}}
where $y \in [0, \infty)$ is the node output and $\sigma \in \{0,1\}$ is a binary variable corresponding to the on-off state of the neuron. 
The formulation requires the bounds (big-M coefficients) $\mathit{LB}^0, \mathit{UB}^0 \in \mathbb{R}$, which should be as tight as possible, such that $(\boldsymbol{w}^T \boldsymbol{x}+b) \in [\mathit{LB}^0, \mathit{UB}^0]$. 

\textbf{Disjunctive Programming} \cite{Balas2018}.
We observe that \eqref{eq:relu} can be modeled as a disjunctive program: \begin{align}
    \left[ \begin{array}{c}y = 0 \\ \boldsymbol{w}^T \boldsymbol{x} + b \leq 0 \end{array} \right] \lor
    \left[ \begin{array}{c} y = \boldsymbol{w}^T \boldsymbol{x} + b \\ \boldsymbol{w}^T \boldsymbol{x} + b \geq 0 \end{array} \right] \label{eq:reluDP}
\end{align}
The extended formulation for disjunctive programs introduces auxiliary variables for each disjunction. 
Instead of directly applying the standard extended formulation, we first define $z := \boldsymbol{w}^T \boldsymbol{x}$ and assume $z$ is bounded. 
The auxiliary variables $z^a \in \mathbb{R}$ and $z^b \in \mathbb{R}$ can then be introduced to model \eqref{eq:reluDP}:
\begin{align}
    &\boldsymbol{w}^T \boldsymbol{x} = z^a + z^b \label{eq:extended1} \\
    &z^a + \sigma b \leq 0 \\
    &z^b + (1-\sigma) b \geq 0 \\
    &y = z^b + (1-\sigma)b \\
    &\sigma \mathit{LB}^a \leq z^a \leq \sigma \mathit{UB}^a \\ 
    &(1-\sigma) \mathit{LB}^b \leq z^b \leq (1-\sigma) \mathit{UB}^b \label{eq:extended2}
\end{align}
where again $y \in [0, \infty)$ and $\sigma \in \{0,1\}$. 
Bounds $\mathit{LB}^a, \mathit{UB}^a, \mathit{LB}^b, \mathit{UB}^b \in \mathbb{R}$ are required for this formulation, such that $z^a \in \sigma[\mathit{LB}^a, \mathit{UB}^a]$ and $z^b \in (1-\sigma)[\mathit{LB}^b, \mathit{UB}^b]$. 
Note that the inequalities in \eqref{eq:reluDP} may cause the domains of $z^a$ and $z^b$ to differ. 
The summation in \eqref{eq:extended1} can be used to eliminate either $z^a$ or $z^b$; therefore, in practice, only one auxiliary variable is introduced by formulation \eqref{eq:extended1}--\eqref{eq:extended2}. 

\textbf{Importance of relaxation strength.} 
MILP is often solved with branch-and-bound, a strategy that bounds the objective function between the best feasible point and its tightest optimal relaxation. 
The integral search space is explored by ``branching'' until the gap between bounds falls below a given tolerance. 
A \textit{tighter}, or stronger, relaxation can reduce this search tree considerably. 

\section{Disaggregated Disjunctions: Between Big-M and the Convex Hull}
\label{sec:formulation}
Our proposed formulations split the sum $z = \boldsymbol{w}^T \boldsymbol{x}$ into partitions: we will show these formulations have tighter continuous relaxations than \eqref{eq:extended1}--\eqref{eq:extended2}. 
In particular, we partition the set $\{1,...,\eta\}$ into subsets $\mathbb{S}_1 \cup \mathbb{S}_2 \cup ... \cup \mathbb{S}_N = \{1,...,\eta\}$; $\mathbb{S}_n \cap \mathbb{S}_{n'} = \emptyset\  \forall n \neq n'$. 
An auxiliary variable is then introduced for each partition, i.e., $z_n = \sum_{i \in \mathbb{S}_n} w_i x_i$. 
Replacing $z = \boldsymbol{w}^T \boldsymbol{x}$ with $\sum_{n=1}^N z_n$, the disjunction \eqref{eq:reluDP} becomes:
\begin{align}
    \left[ \begin{array}{c} y = 0 \\ \sum_{n=1}^N z_n + b \leq 0 \end{array} \right] \lor
    \left[ \begin{array}{c} y = \sum_{n=1}^N z_n + b \\ y \geq 0 \end{array} \right] \label{eq:disjunctive2}
\end{align}
Assuming now that each $z_n$ is bounded, the extended formulation can be expressed using auxiliary variables $z_n^a$ and $z_n^b$ for each $z_n$. 
Eliminating $z_n^a$ via the summation $z_n = z_n^a + z_n^b$ results in our proposed formulation (complete derivation in appendix A.1):
\begin{align}
    &\sum_n \left( \sum_{i \in \mathbb{S}_n} w_i x_i - z_n^b \right) + \sigma b \leq 0 \label{eq:Psplit1} \\
    &\sum_n z_n^b + (1-\sigma) b \geq 0 \\
    &y = \sum_n z_n^b + (1-\sigma)b\\
    &\sigma \mathit{LB}_n^a \leq \sum_{i \in \mathbb{S}_n} w_i x_i  -  z_n^b \leq \sigma \mathit{UB}_n^a, &\forall n = 1,...,N \label{eq:Psplitbound}\\ 
    &(1-\sigma) \mathit{LB}_n^b \leq z_n^b \leq (1-\sigma) \mathit{UB}_n^b, &\forall n = 1,...,N \label{eq:Psplit2}
\end{align}
where $y \in [0, \infty)$ and $\sigma \in \{0,1\}$. 
We observe that \eqref{eq:Psplit1}--\eqref{eq:Psplit2} \textit{exactly represents the ReLU node} (given that the partitions $z_n$ are bounded in the original disjunction): the sum $z_n = \sum_{i \in \mathbb{S}_n} w_i x_i$ is substituted in the extended convex hull formulation \cite{Balas2018} of disjunction \eqref{eq:disjunctive2}.
Note that, compared to the general case presented by \citet{kronqvist2021}, both sides of disjunction \eqref{eq:disjunctive2} can be modeled using the same partitions, resulting in fewer auxiliary variables.  
We further note that domains $[\mathit{LB}_n^a, \mathit{UB}_n^a]$ and $[\mathit{LB}_n^b, \mathit{UB}_n^b]$ may not be equivalent, owing to inequalities in \eqref{eq:disjunctive2}. 

\subsection{Obtaining and Tightening Bounds} \label{sec:obtaining_and_tightening_bounds}
\newcommand{\ubar}[1]{\text{\b{$#1$}}}
The big-M formulation \eqref{eq:bigm1}--\eqref{eq:bigm2} requires valid bounds $(\boldsymbol{w}^T \boldsymbol{x}+b) \in [\mathit{LB}^0, \mathit{UB}^0]$. 
Given bounds for each input variable, $x_i \in [\ubar{x}_i, \bar{x}_i]$, interval arithmetic gives valid bounds:
\begin{align}
    \mathit{LB}^0 &= \sum_i \left( \ubar{x}_i \mathrm{max}(0, w_i) + \bar{x}_i \mathrm{min}(0, w_i) \right) + b \label{eq:interval1} \\
    \mathit{UB}^0 &= \sum_i \left( \bar{x}_i \mathrm{max}(0, w_i) + \ubar{x}_i \mathrm{min}(0, w_i) \right) + b \label{eq:interval2}
\end{align}
But \eqref{eq:interval1}--\eqref{eq:interval2} do not provide the tightest valid bounds in general, as dependencies between the input nodes are ignored. 
Propagating the resulting over-approximated bounds from layer to layer leads to increasingly large over-approximations, i.e., propagating weak bounds through layers results in a significantly weakened model. 
These bounds remain in the proposed formulation \eqref{eq:Psplit1}--\eqref{eq:Psplit2} in the form of bounds on both the output $y$ and the original variables $\boldsymbol{x}$ (i.e., outputs of the previous layer). 

\begin{figure*}[ht]
    \centering
    \includegraphics[width=0.22\textwidth, trim=1cm 0 0 0, clip]{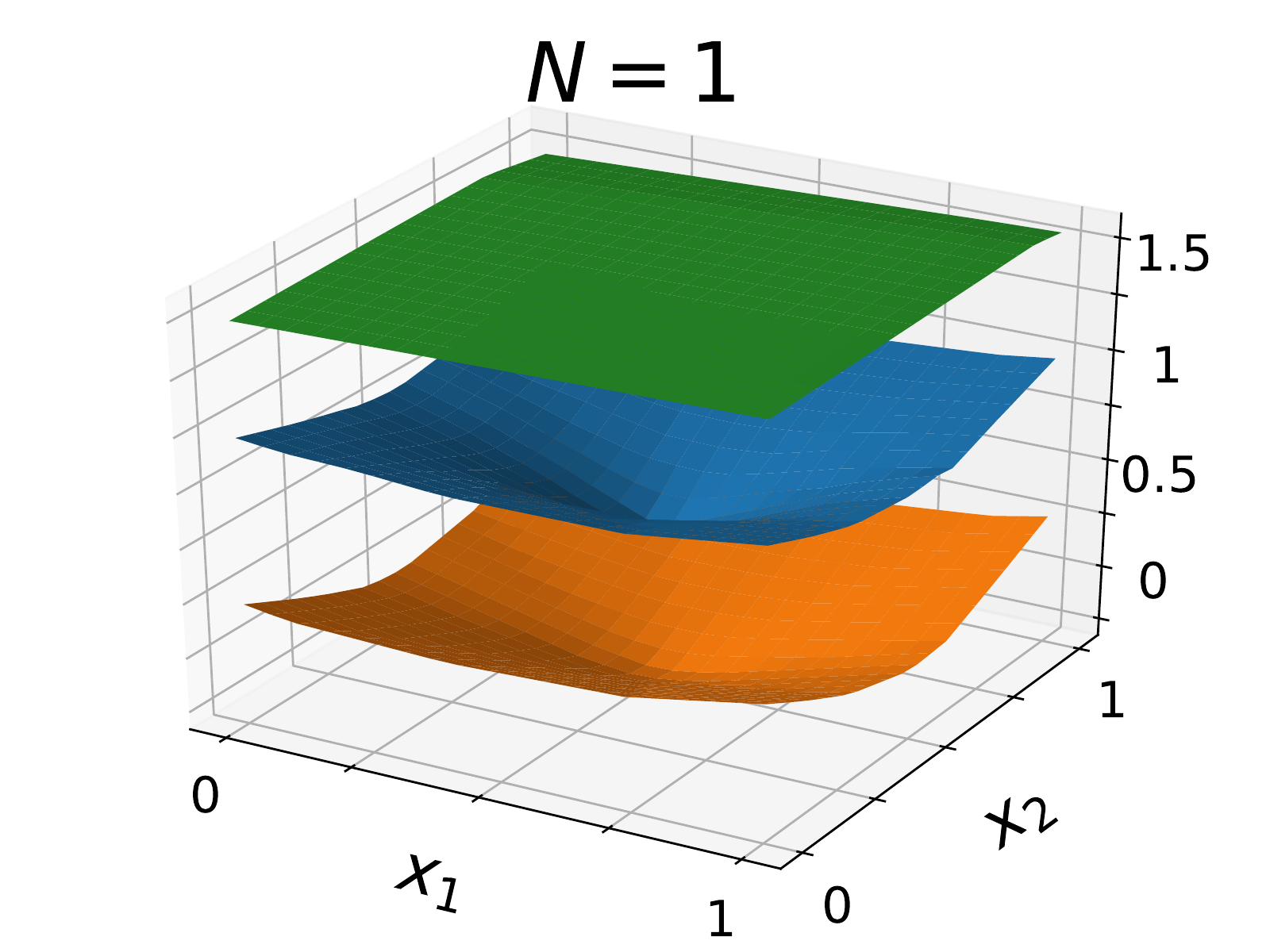}
    \includegraphics[width=0.22\textwidth, trim=1cm 0 0 0, clip]{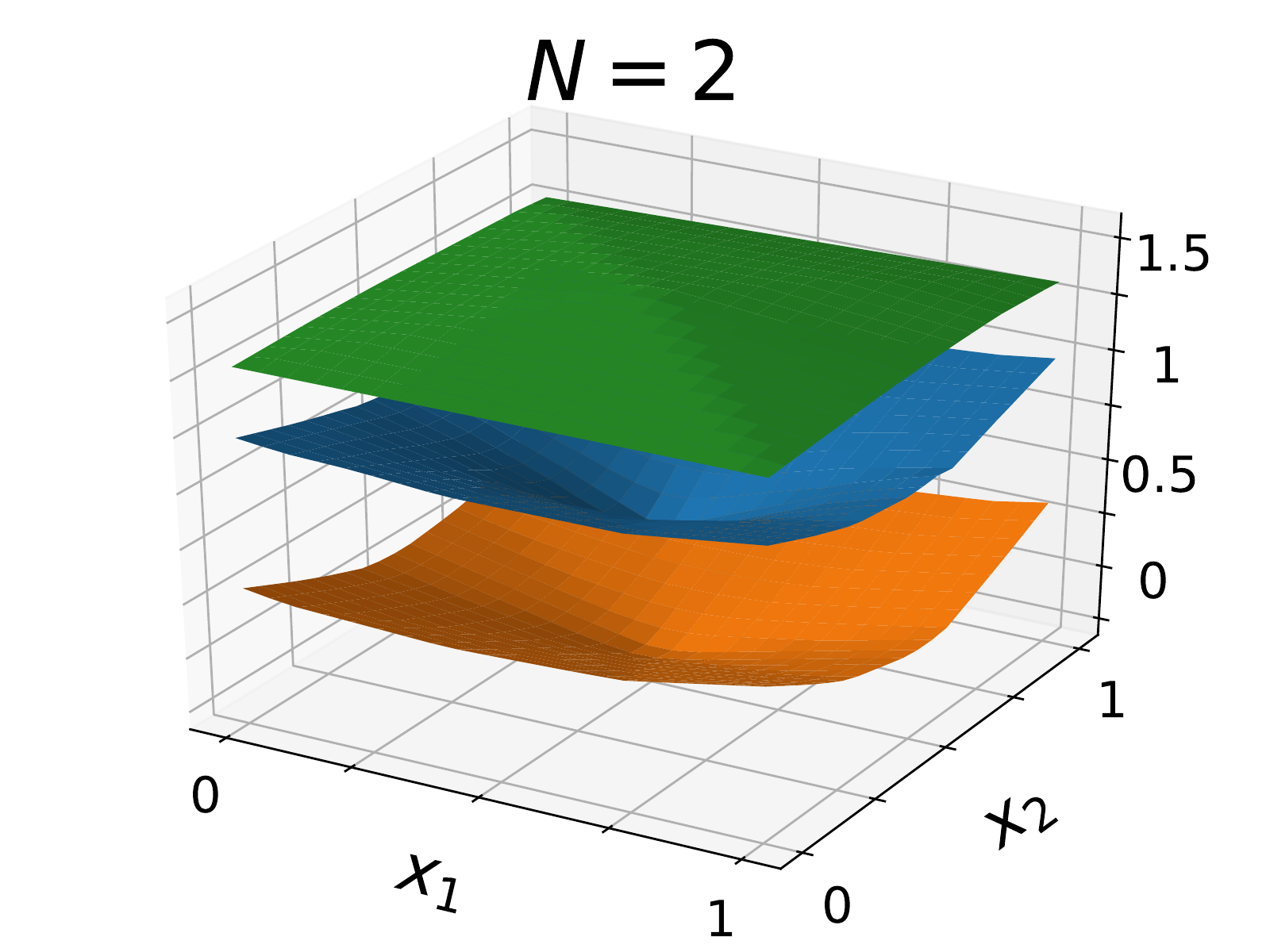} 
    \includegraphics[width=0.22\textwidth, trim=1cm 0 0 0, clip]{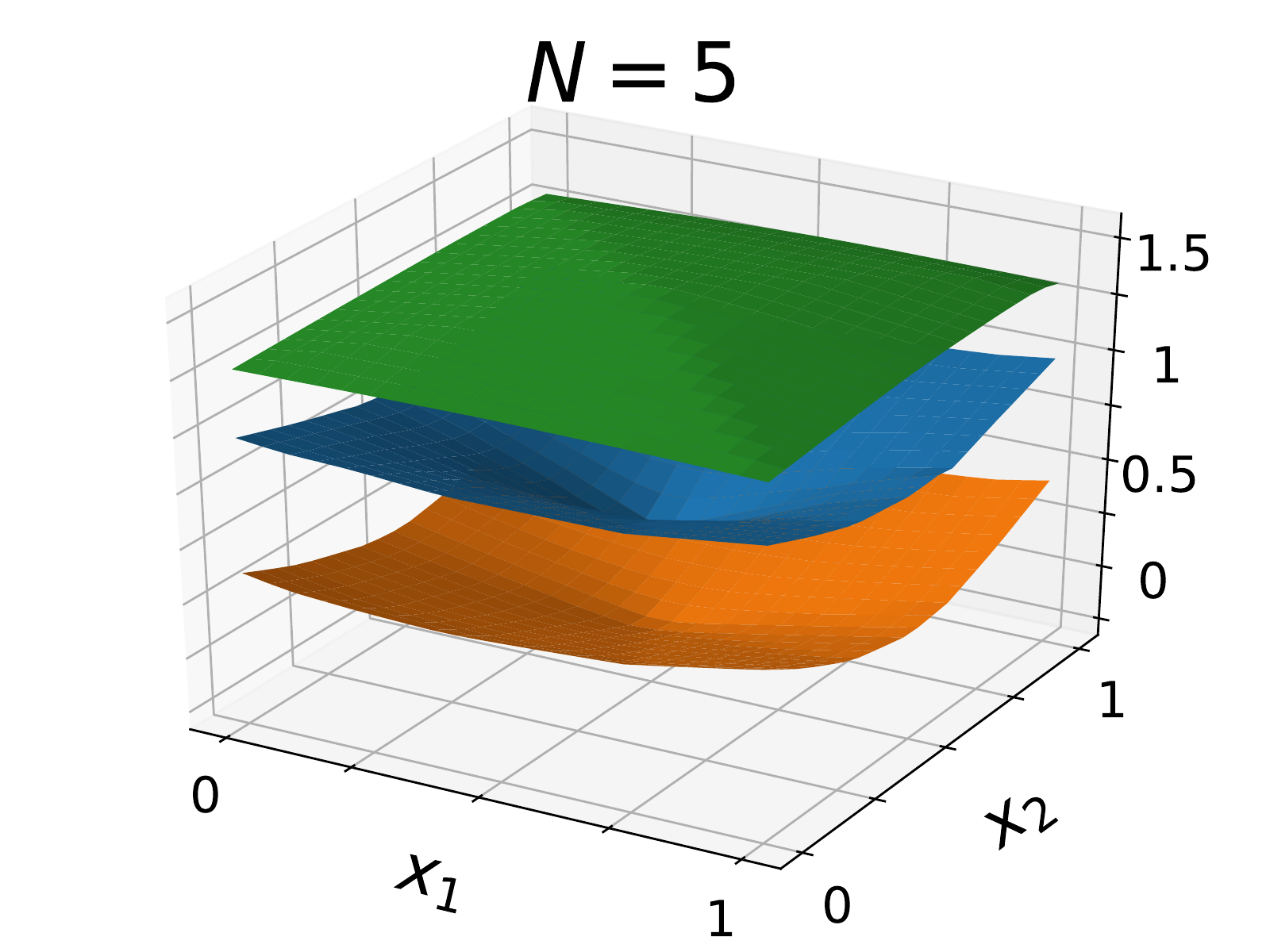}
    \includegraphics[width=0.22\textwidth, trim=1cm 0 0 0, clip]{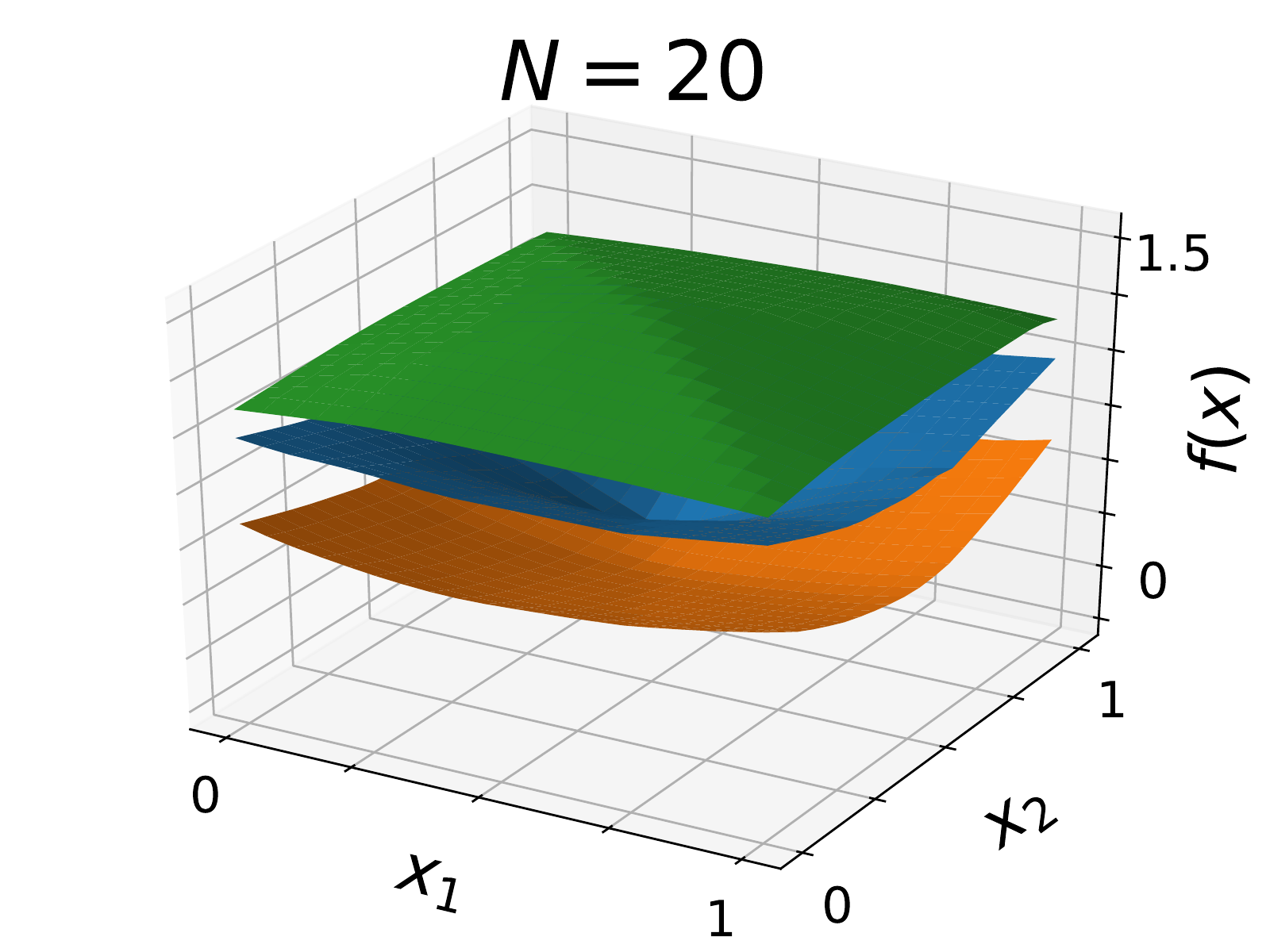} \\
    \includegraphics[width=0.22\textwidth, trim=1cm 0 0 0, clip]{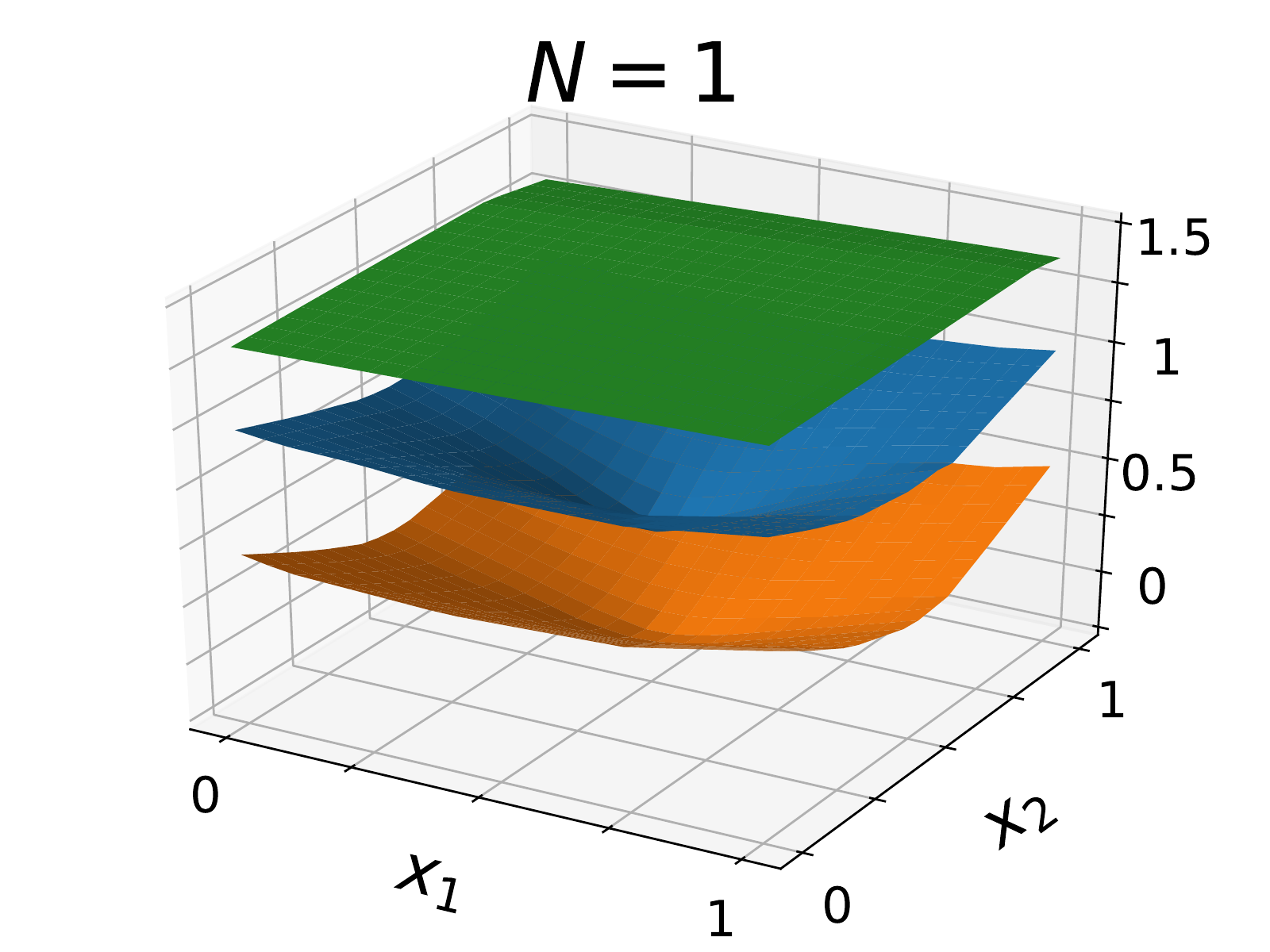}
    \includegraphics[width=0.22\textwidth, trim=1cm 0 0 0, clip]{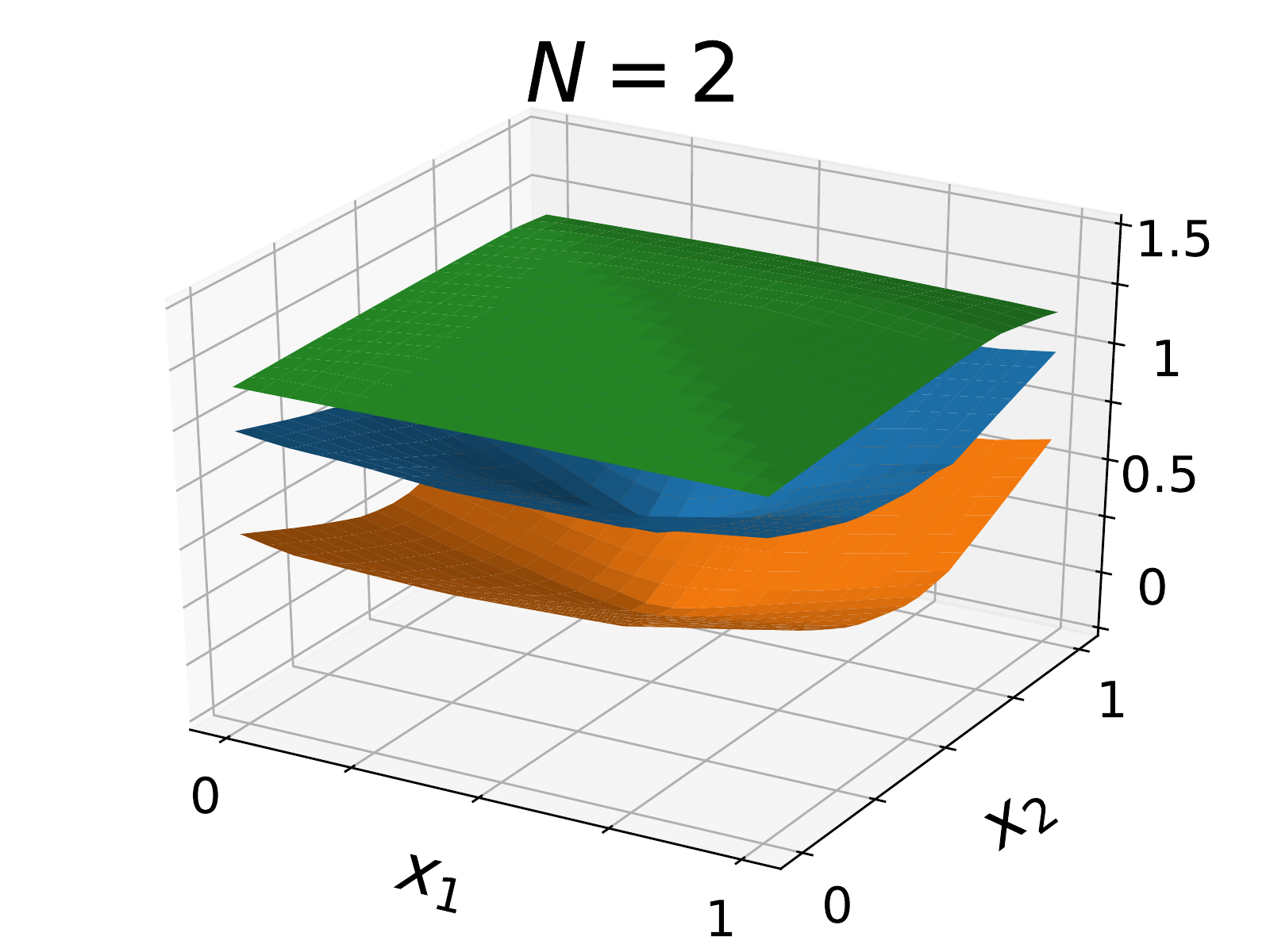}  
    \includegraphics[width=0.22\textwidth, trim=1cm 0 0 0, clip]{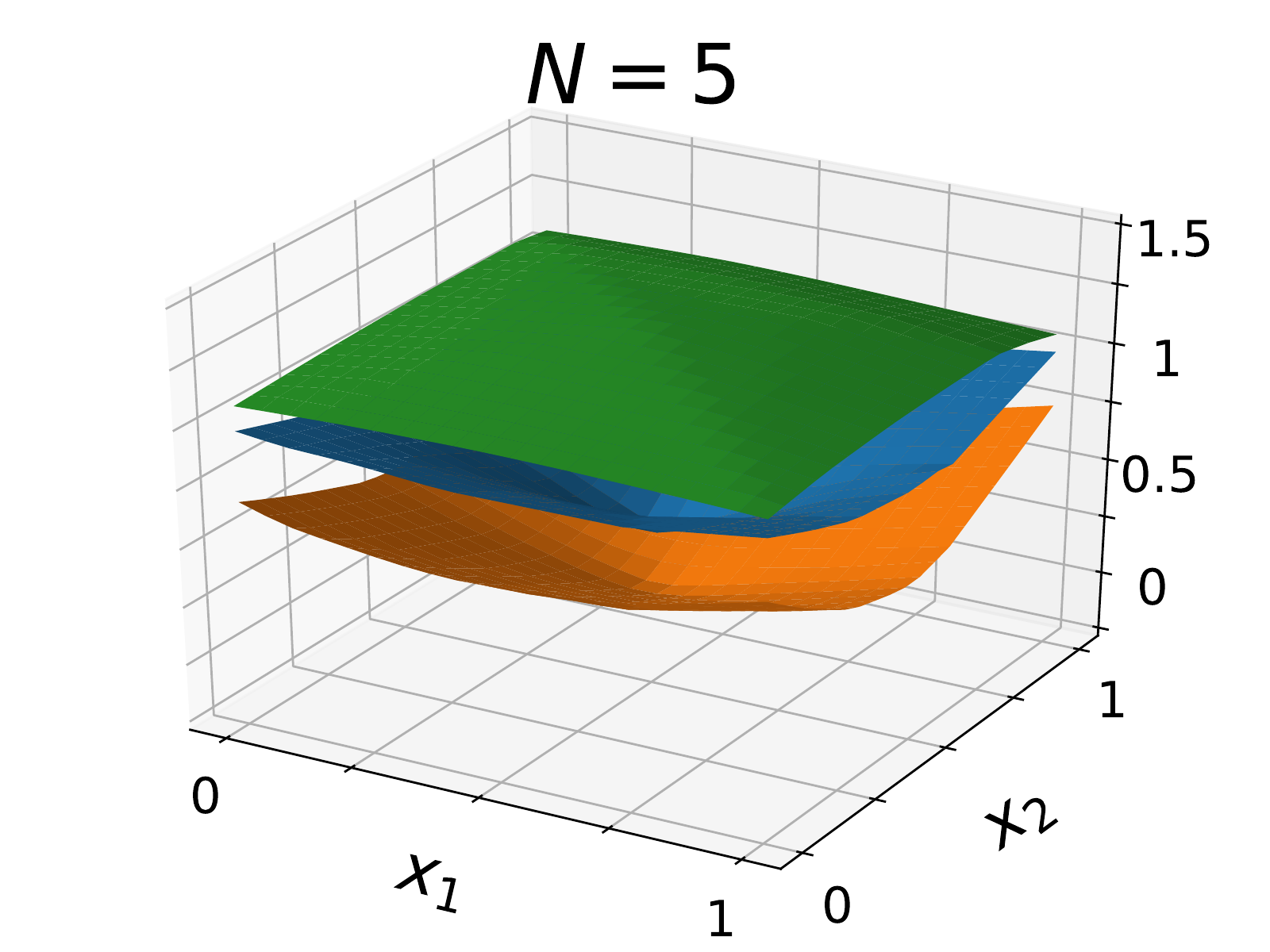}
    \includegraphics[width=0.22\textwidth, trim=1cm 0 0 0, clip]{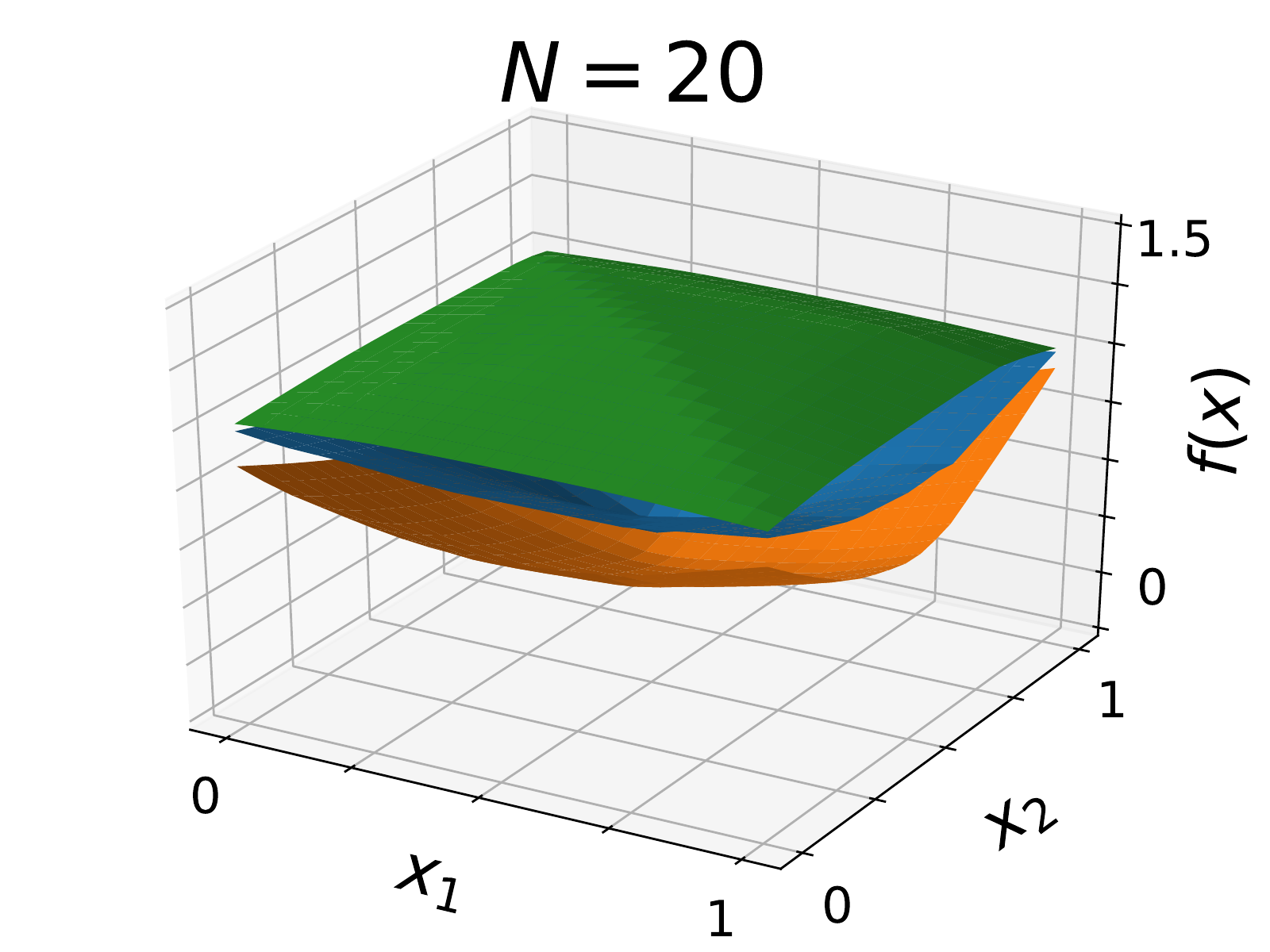}
    \caption{Hierarchical relaxations from $N=1$ (equiv.\ big-M) to $N=20$ (convex hull of each node over a box domain) for a two-input ($x_1, x_2$) NN trained on scaled Ackley function, with output $f(\boldsymbol{x})$. Top row: $z_n^b$ bounds obtained using interval arithmetic; Bottom row: $z_n^b$ bounds obtained by optimization-based bound tightening. The partitions are formed using the \emph{\color{blue} equal size} strategy.} 
    \label{fig:ackley}
\end{figure*}

\textbf{Optimization-Based Bound Tightening (OBBT)}
or \emph{progressive bounds tightening} \cite{tjeng2017evaluating}, tightens variable bounds and constraints \cite{dvijotham2018dual}. 
For example, solving the optimization problem with the objective set to minimize/maximize $(\boldsymbol{w}^T \boldsymbol{x} + b)$ gives bounds: $\mathrm{min}(\boldsymbol{w}^T \boldsymbol{x} + b) \leq \boldsymbol{w}^T \boldsymbol{x} + b \leq \mathrm{max}(\boldsymbol{w}^T \boldsymbol{x} + b)$. 
To simplify these problems, OBBT can be performed using the \textit{relaxed} model (i.e., $\sigma \in [0,1]$ rather than $\sigma \in \{0,1\}$), resulting in a linear program (LP). 
In contrast to \eqref{eq:interval1}--\eqref{eq:interval2}, bounds from OBBT incorporate variable dependencies. 
We apply OBBT by solving one LP per bound. 

The partitioned formulation \eqref{eq:Psplit1}--\eqref{eq:Psplit2} requires bounds such that $z_n^a \in \sigma[\mathit{LB}_n^a, \mathit{UB}_n^a]$, $z_n^b \in (1-\sigma) [\mathit{LB}_n^b, \mathit{UB}_n^b]$. 
In other words, $[\mathit{LB}_n^a, \mathit{UB}_n^a]$ is a valid domain for $z_n^a$ when the node is inactive ($\sigma=1$) and vice versa. 
These bounds can also be obtained via OBBT: $\mathrm{min}(\sum_{i \in \mathbb{S}_n} w_i x_i) \leq z_n^a \leq \mathrm{max}(\sum_{i \in \mathbb{S}_n} w_i x_i) ; \boldsymbol{w}^T \boldsymbol{x} + b \leq 0$. 
The constraint on the right-hand side of disjunction \eqref{eq:disjunctive2} can be similarly enforced in OBBT problems for $z_n^b$. 
In our framework, OBBT additionally captures dependencies within each partition $\mathbb{S}_n$. 
Specifically, we observe that the partitioned OBBT problems effectively form the convex hull over a given polyhedron of the input variables, in contrast to the convex hull formulation, which only considers the box domain defined by the min/max of each input node \cite{anderson2020strong}.
Since $\sum_{i \in \mathbb{S}_n} w_i x_i$ $= z_n^a + z_n^b$ and $z_n^a z_n^b = 0$, the bounds $[\mathrm{min}(\sum_{i \in \mathbb{S}_n} w_i x_i)$, $\mathrm{max}(\sum_{i \in \mathbb{S}_n} w_i x_i)]$ are valid for both $z_n^a$ and $z_n^b$. 
These bounds can be from \eqref{eq:interval1}--\eqref{eq:interval2}, or by solving two OBBT problems for each partition ($2N$ LPs total). 
This simplification uses equivalent bounds for $z_n^a$ and $z_n^b$, but tighter bounds can potentially be found by performing OBBT for $z_n^a$ and $z_n^b$ separately ($4N$ LPs total). 

Figure \ref{fig:ackley} compares the proposed formulation with bounds from interval arithmetic (top row) vs OBBT (bottom row). 
The true model outputs (blue), and minimum (orange) and maximum (green) outputs of the relaxed model are shown over the input domain. 
The NNs comprise two inputs, three hidden layers with 20 nodes each, and one output. 
As expected, OBBT greatly improves relaxation tightness.

\subsection{Tightness of the Proposed Formulation}
\label{sec:propositions}

\begin{proposition}
\eqref{eq:Psplit1}--\eqref{eq:Psplit2} has the equivalent non-lifted (i.e., without auxiliary variables) formulation:
{\allowdisplaybreaks
\begin{align}
    &y \leq \sum_{i \in \mathcal{I}_j}w_i x_i + \sigma (b  +  \sum_{i \in \mathcal{I} \setminus \mathcal{I}_j} \mathit{UB}_i) + (\sigma-1)(\sum_{i \in \mathcal{I}_j}\mathit{LB}_i), &\forall j=1,...,2^N \label{eq:nonextended} \\
    &y \geq \boldsymbol{w}^T \boldsymbol{x} + b \\
    &y \leq \sigma \mathit{UB}^0
\end{align}}
where $\mathit{UB}_i$ and $\mathit{LB}_i$ denote the upper and lower bounds of $w_i x_i$. 
The set $\mathcal{I}$ denotes the input indices $\{1,...,\eta \}$, and the subset $\mathcal{I}_j$ denotes the indices contained by the union of the $j$-th combination of partitions in $\{\mathbb{S}_1,...,\mathbb{S}_N \}$. 
\end{proposition}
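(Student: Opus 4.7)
The plan is to derive the non-lifted form by projecting the auxiliary variables $\{z_n^b\}_{n=1}^N$ out of \eqref{eq:Psplit1}--\eqref{eq:Psplit2} onto $(y, \boldsymbol{x}, \sigma)$. Each $z_n^b$ enters only through the identity $y = \sum_n z_n^b + (1-\sigma)b$, the summation inequality \eqref{eq:Psplit1}, and the two pairs of bounds in \eqref{eq:Psplitbound}--\eqref{eq:Psplit2}, namely $z_n^b \in [(1-\sigma)\mathit{LB}_n^b,\,(1-\sigma)\mathit{UB}_n^b]$ and $z_n^b \in [\sum_{i \in \mathbb{S}_n} w_i x_i - \sigma \mathit{UB}_n^a,\,\sum_{i \in \mathbb{S}_n} w_i x_i - \sigma \mathit{LB}_n^a]$. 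Because these constraints are separable across $n$, the projection decomposes partition-by-partition.

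To obtain the family of upper bounds on $y$, I would combine the per-partition upper bounds $z_n^b \le \min\bigl((1-\sigma)\mathit{UB}_n^b,\,\sum_{i \in \mathbb{S}_n} w_i x_i - \sigma \mathit{LB}_n^a\bigr)$ and sum through the linking identity to get $y \le (1-\sigma)b + \sum_n \min(\cdot)$. Distributing the $N$ minima produces $2^N$ valid inequalities indexed by subsets $J \subseteq \{1,\ldots,N\}$, where $n \in J$ selects the affine term. Writing $\mathcal{I}_J = \bigcup_{n \in J} \mathbb{S}_n$, using the interval-arithmetic identifications $\mathit{LB}_n^a = \sum_{i \in \mathbb{S}_n} \mathit{LB}_i$ and $\mathit{UB}_n^b = \sum_{i \in \mathbb{S}_n} \mathit{UB}_i$, and relabeling the binary as $\sigma \mapsto 1-\sigma$ (to match the big-M convention of the proposition, in which $\sigma = 1$ denotes an active node, opposite to the extended formulation's $\sigma$), each combination yields exactly the stated inequality. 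The case $J = \emptyset$ recovers $y \le \sigma\mathit{UB}^0$ with $\mathit{UB}^0 = b + \sum_i \mathit{UB}_i$, so the separately listed big-M-style constraint is just a special case of the $2^N$ family.

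The remaining inequality $y \ge \boldsymbol{w}^T \boldsymbol{x} + b$ drops out directly from \eqref{eq:Psplit1}: rearranging gives $\sum_n z_n^b \ge \boldsymbol{w}^T \boldsymbol{x} + \sigma b$, and substitution into $y = \sum_n z_n^b + (1-\sigma)b$ yields $y \ge \boldsymbol{w}^T \boldsymbol{x} + b$. This inequality is invariant under the $\sigma \mapsto 1-\sigma$ relabeling.

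The main obstacle is showing that the projected lower bounds on $z_n^b$ contribute no new non-redundant inequalities. The $2^N$ lower-bound combinations interpolate between $y \ge (1-\sigma)\mathit{LB}^0$ (dominated by $y \ge 0$ whenever $\mathit{LB}^0 \le 0$) and $y \ge \boldsymbol{w}^T \boldsymbol{x} + b - \sigma\mathit{UB}^0$ (dominated by the already-established $y \ge \boldsymbol{w}^T \boldsymbol{x} + b$), with intermediate combinations likewise redundant. A careful Fourier--Motzkin argument, or equivalently an appeal to the convex-hull representation of disjunctive programs, confirms that no further constraints survive and completes the equivalence modulo the convention flip in $\sigma$.
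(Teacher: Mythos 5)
Your proposal is correct and follows essentially the same route as the paper's own argument (the one-line proof deferring to Appendix A.2): Fourier--Motzkin elimination of the $z_n^b$, with the $2^N$ inequalities arising from the binary choice of upper bound per partition, the lower-bound combinations shown redundant against $y \geq 0$ and $y \geq \boldsymbol{w}^T\boldsymbol{x} + b$, and the $\sigma \mapsto 1-\sigma$ relabeling to match the big-M convention. The only difference is presentational (you phrase the case analysis as distributing a minimum over partitions rather than enumerating index sets $\mathcal{A}$, $\mathcal{B}$), which does not change the substance.
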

\begin{proof}
Formulation  \eqref{eq:Psplit1}--\eqref{eq:Psplit2} introduces $N$ auxiliary variables $z_n^b$, which can be projected out using Fourier-Motzkin elimination (appendix A.2), resulting in combinations $\mathcal{I}_1,...,\mathcal{I}_J, J = 2^N$. 
\end{proof}

\newtheorem*{remark}{Remark}
\begin{remark}
When $N < \eta$, the family of constraints in \eqref{eq:nonextended} represent a subset of the inequalities used to define the convex hull by \citet{anderson2020strong}, where $\mathit{UB}_i$ and $\mathit{LB}_i$ would be obtained using interval arithmetic. 
Therefore, though a lifted formulation is proposed in this work, the proposed formulations have non-lifted relaxations equivalent to pre-selecting a subset of the convex hull inequalities. 
\end{remark}

\begin{proposition}
Formulation \eqref{eq:Psplit1}--\eqref{eq:Psplit2} is equivalent to the big-M formulation \eqref{eq:bigm1}--\eqref{eq:bigm2} when $N=1$ . 
\end{proposition}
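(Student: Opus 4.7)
The plan is to set $N=1$ directly in (\ref{eq:Psplit1})--(\ref{eq:Psplit2}) and project out the single auxiliary variable $z_1^b$, then match each resulting inequality to a big-M constraint (\ref{eq:bigm1})--(\ref{eq:bigm2}). The one subtle point is that the disjunction (\ref{eq:disjunctive2}) associates $\sigma=1$ with the \emph{inactive} branch $y=0$, whereas the big-M formulation associates $\sigma=1$ with the \emph{active} branch; since $\sigma$ is an internal binary variable, the two conventions differ only by the relabeling $\sigma \mapsto 1-\sigma$, which I will apply at the end.

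First I would observe that with the single partition $\mathbb{S}_1=\{1,\ldots,\eta\}$ the sum $\sum_{i\in\mathbb{S}_1} w_i x_i$ collapses to $\boldsymbol{w}^T\boldsymbol{x}$, and only one pair of bounds is involved. These bounds are read off from the two halves of (\ref{eq:disjunctive2}): $[\mathit{LB}_1^a,\mathit{UB}_1^a]=[\mathit{LB}^0-b,\,-b]$ for the inactive branch $\boldsymbol{w}^T\boldsymbol{x}+b\leq 0$, and $[\mathit{LB}_1^b,\mathit{UB}_1^b]=[-b,\,\mathit{UB}^0-b]$ for the active branch $\boldsymbol{w}^T\boldsymbol{x}+b\geq 0$, with $\mathit{LB}^0,\mathit{UB}^0$ being the big-M preactivation bounds.

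Next, I would use the equality $y=z_1^b+(1-\sigma)b$ to substitute $z_1^b:=y-(1-\sigma)b$ into the remaining constraints and verify each reduction by direct algebra. Constraint (\ref{eq:Psplit1}) and the upper half of (\ref{eq:Psplitbound}) both collapse to $y\geq \boldsymbol{w}^T\boldsymbol{x}+b$, reproducing (\ref{eq:bigm1}) (so the second is redundant); the lower half of (\ref{eq:Psplitbound}) gives $y\leq \boldsymbol{w}^T\boldsymbol{x}+b-\sigma\mathit{LB}^0$; the upper half of (\ref{eq:Psplit2}) gives $y\leq(1-\sigma)\mathit{UB}^0$; and the two remaining sign inequalities both reduce to $y\geq 0$. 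Applying the relabeling $\sigma\mapsto 1-\sigma$ then yields exactly (\ref{eq:bigm1})--(\ref{eq:bigm2}). The converse direction is immediate: given $(\boldsymbol{x},y,\sigma')$ feasible for big-M, setting $\sigma:=1-\sigma'$ and $z_1^b:=y-(1-\sigma)b$ gives a feasible point of (\ref{eq:Psplit1})--(\ref{eq:Psplit2}), and since $z_1^b$ is uniquely determined by $(y,\sigma)$ through the equality, the LP relaxations also coincide after projection onto $(\boldsymbol{x},y,\sigma)$.

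The main obstacle is purely bookkeeping: being careful about the $\sigma$-convention flip (to avoid apparent sign mismatches) and verifying that the bound pairs $[\mathit{LB}_1^a,\mathit{UB}_1^a]$, $[\mathit{LB}_1^b,\mathit{UB}_1^b]$ that come out of the disjunction recombine with the offset $(1-\sigma)b$ to exactly the big-M coefficients $\mathit{LB}^0, \mathit{UB}^0$. Once these are in place, the argument is a short, mechanical check on five inequalities.
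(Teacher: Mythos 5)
Your proposal is correct and follows the same basic route as the paper: with $N=1$ the single partition collapses to $z_1=\boldsymbol{w}^T\boldsymbol{x}$, and the partitioned formulation reduces to the extended convex-hull formulation over that one aggregated variable, which is big-M. The paper's own proof stops at this observation and simply asserts the equivalence, whereas you supply the explicit verification it omits --- projecting out $z_1^b$ via $y=z_1^b+(1-\sigma)b$, matching each resulting inequality to \eqref{eq:bigm1}--\eqref{eq:bigm2}, and correctly handling the $\sigma\mapsto 1-\sigma$ convention flip between the disjunctive and big-M encodings --- all of which checks out.
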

\begin{proof}
When $N=1$, it follows that $\mathbb{S}_1 = \{1,..,\eta\}$. 
Therefore, $z_1 = \sum_{i=1}^{\eta} w_i x_i = \boldsymbol{w}^T\boldsymbol{x}$, and $\sum_n z_n = z_1 = z$. 
Conversely, big-M can be seen as the convex hull over a single aggregated ``variable,'' $z = \boldsymbol{w}^T\boldsymbol{x}$. 
\end{proof}

\begin{proposition}
Formulation \eqref{eq:Psplit1}--\eqref{eq:Psplit2} represents the convex hull of \eqref{eq:relu}, given the inputs $\boldsymbol{x}$ are bounded, for the case of $N=\eta$. 
\end{proposition}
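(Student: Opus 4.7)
The plan is to reduce the claim to Proposition~1 and then match the resulting inequality system to the explicit convex-hull description of a single ReLU node given by Anderson et al.\ (2020).

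First I would unpack the setup. When $N=\eta$, every partition is a singleton $\mathbb{S}_n=\{n\}$, so each $z_n = w_n x_n$ is a single scaled input, and the scalar bounds reduce to $\mathit{LB}_n=\min(w_n\ubar{x}_n,w_n\bar{x}_n)$ and $\mathit{UB}_n=\max(w_n\ubar{x}_n,w_n\bar{x}_n)$. In this setting the lifted formulation (\ref{eq:Psplit1})--(\ref{eq:Psplit2}) is exactly Balas' extended convex-hull form of the disjunction (\ref{eq:disjunctive2}) with one disaggregation variable per input.

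Next I would apply Proposition~1 with $N=\eta$. The non-lifted formulation it produces has $2^N=2^\eta$ inequalities of the form (\ref{eq:nonextended}), one for every subset $\mathcal{I}_j \subseteq \{1,\dots,\eta\}$, together with $y \ge \boldsymbol{w}^T\boldsymbol{x}+b$ and $y \le \sigma\mathit{UB}^0$. By the Remark, these always form a subset of Anderson et al.'s convex-hull-defining inequalities evaluated at interval bounds on $w_i x_i$. Anderson et al.\ describe the convex hull over the input box using exactly one such inequality per subset of input indices, so enumerating all $2^\eta$ subsets (as Proposition~1 does when $N=\eta$) yields the complete system, and the claim follows once the standard $y$- and $\boldsymbol{x}$-bounds are included.

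The main obstacle I anticipate is bookkeeping around the bounds: in the lifted formulation the domains $[\mathit{LB}_n^a,\mathit{UB}_n^a]$ and $[\mathit{LB}_n^b,\mathit{UB}_n^b]$ can be sharpened by the auxiliary inequalities inside (\ref{eq:disjunctive2}), and I would need to confirm that the Fourier--Motzkin step cited in Proposition~1 ultimately reproduces the interval bounds on each $w_i x_i$ used by Anderson et al. An equivalent and perhaps cleaner finish is to invoke Balas' theorem directly: the extended formulation of a disjunction of polyhedra is the convex hull in the lifted space, so projecting out the $z_n^b$ in the $N=\eta$ case yields the convex hull in the original $(\boldsymbol{x},y,\sigma)$ variables without any explicit inequality matching.
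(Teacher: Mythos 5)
Your proposal is correct, but your primary route differs from the paper's. The paper stays in the lifted space: since $N=\eta$ makes each $z_n = w_n x_n$ an invertible coordinatewise linear transformation of $x_n$ (with correspondingly transformed bounds), the extended formulation \eqref{eq:Psplit1}--\eqref{eq:Psplit2} is, after the substitution $z_n^a = w_n x_n^a$, $z_n^b = w_n x_n^b$, exactly the ``multiple choice'' formulation \eqref{eq:hull1}--\eqref{eq:hull2}, which is the known Balas convex-hull formulation of the two-piece disjunction \eqref{eq:disjunctive2} over the input box. That is precisely the ``cleaner finish'' you sketch in your last sentence. Your main argument instead projects to the non-lifted space via Proposition~1 and matches the resulting $2^\eta$ inequalities \eqref{eq:nonextended}, one per subset $\mathcal{I}_j \subseteq \{1,\dots,\eta\}$, against the complete facet description of \citet{anderson2020strong}. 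Both work. The projection route makes the connection to the Remark and to Proposition~4 explicit (you see exactly which facets appear for each $N$), but it carries the bookkeeping burden you correctly identify: you must verify that the Fourier--Motzkin elimination produces no other non-redundant inequalities and that the bounds $\mathit{LB}_i,\mathit{UB}_i$ surviving the projection are the interval bounds on $w_i x_i$ assumed by Anderson et al.\ (the paper's appendix handles this by showing the lower-bounding combinations are dominated by $y \ge \boldsymbol{w}^T\boldsymbol{x}+b$ and $y\ge 0$, and by using the simplification that the same interval is valid for $z_n^a$ and $z_n^b$). The lifted route avoids all of that: Balas' theorem gives the convex hull of the disjunction for free, and the only thing to check is that the $N=\eta$ disjunction over bounded $z_n$ coincides with the ReLU graph over the box, which is immediate from the bijection $x_n \leftrightarrow w_n x_n$ (inputs with $w_n=0$ being irrelevant). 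If you pursue your primary route, make the reliance on Anderson et al.'s completeness theorem explicit, since it is doing the real work there.
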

\begin{proof}
When $N=\eta$, it follows that $\mathbb{S}_n = \{n\}, 
\forall n=1,..,\eta$, and, consequently, $z_n = w_n x_n$. 
Since $z_n$ are linear transformations of each $x_n$ (as are their respective bounds), forming the convex hull over $z_n$ recovers the convex hull over $\boldsymbol{x}$. An extended proof is provided in appendix A.3. 
\end{proof}

\begin{proposition}
A formulation with $N$ partitions is strictly tighter than any formulation with $(N-1)$ partitions that is derived by combining two partitions in the former.
\end{proposition}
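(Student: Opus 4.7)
The plan is to work with the non-lifted characterization from Proposition 1, since it already projects out the auxiliary variables $z_n^b$ and allows a direct comparison of the two LP relaxations in a common $(\boldsymbol{x}, y, \sigma)$ space. Write $P_N$ and $P_{N-1}$ for the LP-relaxation polyhedra of the $N$- and $(N-1)$-partition formulations, the latter obtained by merging two partitions $\mathbb{S}_i$ and $\mathbb{S}_j$ of the former into $\mathbb{S}_i \cup \mathbb{S}_j$.

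First I would establish containment $P_N \subseteq P_{N-1}$. The inequalities in \eqref{eq:nonextended} are indexed by the $2^N$ unions $\mathcal{I}_j$ of sub-collections of $\{\mathbb{S}_1, \ldots, \mathbb{S}_N\}$, and their coefficients depend only on the per-index bounds $\mathit{LB}_i, \mathit{UB}_i$ on $w_i x_i$, which are invariant under merging of partitions. Each of the $2^{N-1}$ inequalities arising from the merged formulation corresponds exactly to an inequality of the $N$-partition formulation whose $\mathcal{I}_j$ either contains both $\mathbb{S}_i$ and $\mathbb{S}_j$ or excludes both. Thus the $(N-1)$-partition inequalities form a subset of the $N$-partition ones, and $P_N \subseteq P_{N-1}$ follows immediately.

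For strict containment, I would exhibit a feasible point of $P_{N-1}$ that violates at least one of the $2^{N-1}$ ``new'' inequalities in $P_N$, i.e., those indexed by an $\mathcal{I}_j$ containing exactly one of $\mathbb{S}_i, \mathbb{S}_j$ (say $\mathcal{I}_j = \mathbb{S}_i$). The natural construction is a fractional point with $\sigma^\star \in (0,1)$ together with inputs $\boldsymbol{x}^\star$ chosen so that $\sum_{k \in \mathbb{S}_i} w_k x_k^\star$ sits near its own extremum while the complementary partition sum is driven the opposite way; such a configuration is visible to the finer partitioning but averaged out by the coarser one. Plugging $(\boldsymbol{x}^\star, y^\star, \sigma^\star)$ (with $y^\star$ chosen just below the $(N-1)$-formulation's upper bound) into the candidate inequality should produce a strict violation while all retained inequalities still hold.

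The main obstacle will be making this separating-point construction precise and fully general. Truly degenerate ReLU nodes --- e.g., those linear on the entire input box ($\mathit{LB}^0 \geq 0$ or $\mathit{UB}^0 \leq 0$), or partitions whose weights are identically zero --- collapse the two formulations, so some mild non-degeneracy (a genuinely ``piecewise'' node with nontrivial partitions) is needed for strictness. A secondary concern is that the per-partition bounds $\mathit{LB}_n^b, \mathit{UB}_n^b$ in the lifted formulation could be tightened independently by OBBT; to keep the comparison fair the bounds used by the $(N-1)$-formulation should be taken as the Minkowski sums of those for $\mathbb{S}_i$ and $\mathbb{S}_j$ (which is the regime under which Proposition 1 applies), so that any strict improvement is attributable to the refined partition structure rather than to incidental bound differences.
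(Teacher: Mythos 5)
Your containment argument is exactly the paper's: both pass to the non-lifted form of Proposition 1 and observe that the $2^{N-1}$ inequalities of the merged formulation are precisely those $N$-partition inequalities in \eqref{eq:nonextended} whose index set $\mathcal{I}_j$ contains both or neither of the two merged partitions, so the $N$-partition relaxation is contained in the $(N-1)$-partition one. Where you diverge is on strictness, and this is where your write-up stops short of a proof. The paper closes that step in one line by invoking \citet{anderson2020strong}: every inequality in \eqref{eq:nonextended} is facet-defining for the convex hull over the box domain and the inequalities are pairwise distinct, so a dropped facet-defining inequality cannot be implied by the retained ones and the relaxation strictly enlarges. You instead propose to exhibit an explicit point of the coarser relaxation violating one of the ``new'' inequalities (fractional $\sigma$, partition sums pushed to opposite extremes), but you explicitly acknowledge that making this construction precise is an unresolved obstacle. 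Such a point does exist --- its existence is exactly what the facet-defining property encodes --- so your plan is sound, but as written the half of the proposition that carries the word ``strictly'' is a sketch, not an argument. If you do not want to carry out the construction, the facet citation is the efficient way to finish.

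Two of your side remarks deserve credit. First, your non-degeneracy caveat is real: if the merged partitions have, e.g., identically zero weights, or the node is linear over the entire box, the dropped inequalities are implied by the retained ones and strictness fails; the paper's one-line proof silently assumes this away, so you are being more careful than the source. Second, your insistence that the coarser formulation's bounds be the sums of the finer ones (rather than independently tightened by OBBT) is the right framing for the comparison to be meaningful, and again is left implicit in the paper.
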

\begin{proof}
When combining two partitions, i.e., $\mathbb{S}'_{N-1} := \mathbb{S}_{N-1} \cup \mathbb{S}_N$, constraints in \eqref{eq:nonextended} where $\mathbb{S}'_{N-1} \subseteq \mathcal{I}_j$ are also obtained by $\{\mathbb{S}_{N-1}, \mathbb{S}_N\} \subseteq \mathcal{I}_j$. 
In contrast, those obtained by $\mathbb{S}_{N-1} \lor \mathbb{S}_N \subseteq \mathcal{I}_j$ cannot be modeled by $\mathbb{S}'_{N-1}$. 
Since each constraint in \eqref{eq:nonextended} is facet-defining \cite{anderson2020strong} and distinct from each other, omissions result in a less tight formulation. 
\end{proof}

\begin{remark}
The convex hull can be formulated with $\eta$ auxiliary variables \eqref{eq:hull1}--\eqref{eq:hull2}, or $2^\eta$ constraints \eqref{eq:nonextended}. 
While these formulations have tighter relaxations, they can perform worse than big-M due to having more difficult branch-and-bound subproblems. 
Our proposed formulation balances this tradeoff by introducing a hierarchy of relaxations with increasing tightness and size.
The convex hull is created over partitions $z_n, n=1,...,N$, rather than the input variables $x_n, n=1,...,\eta$.
Therefore, only $N$ auxiliary variables are introduced, with $N \leq \eta$. 
The results in this work are obtained using this lifted formulation \eqref{eq:Psplit1}--\eqref{eq:Psplit2}; Appendix A.5 compares the computational performance of equivalent non-lifted formulations, i.e., involving $2^N$ constraints. 
\end{remark}

Figure \ref{fig:ackley} shows a hierarchy of increasingly tight formulations from $N=1$ (equiv.\ big-M ) to $N=20$ (convex hull of each node over a box input domain). 
The intermediate formulations approximate the convex-hull ($N=20$) formulation well, but need fewer auxiliary variables/constraints. 

\subsection{Effect of Input Partitioning Choice}
\label{sec:partitioning}
Formulation \eqref{eq:Psplit1}--\eqref{eq:Psplit2} creates the convex hull over $z_n = \sum_{i \in \mathbb{S}_n} w_i x_i, n = 1,..., N$. 
The hyperparameter $N$ dictates model size, and the choice of subsets, $\mathbb{S}_1,..., \mathbb{S}_N$, can strongly impact the strength of its relaxation. 
By Proposition 4, \eqref{eq:Psplit1}--\eqref{eq:Psplit2} can in fact give multiple hierarchies of formulations. 
While all hierarchies eventually converge to the convex hull, we are primarily interested in those with tight relaxations for small $N$. 

\textbf{Bounds and Bounds Tightening.} 
Bounds on the partitions play a key role in the proposed formulation. 
For example, consider when a node is inactive: $\sigma = 1$, $z_n^b = 0$, and \eqref{eq:Psplitbound} gives the bounds $ \sigma LB_n^a \leq \sum_{i \in \mathbb{S}_n} w_i x_i \leq  \sigma UB_n^a$. 
Intuitively, the proposed formulation represents the convex hull over the auxiliary variables, $z_n = \sum_{i \in \mathbb{S}_n} w_i x_i$, and their bounds play a key role in model tightness. 
We hypothesize these bounds are most effective when partitions $\mathbb{S}_n$ are selected such that $w_i x_i, \forall i \in \mathbb{S}_n$ are of similar orders of magnitude. 
Consider for instance the case of $\boldsymbol{w} = [1, 1, 100, 100]$ and $x_i \in [0,1], i=1,...,4$. 
As all weights are positive, interval arithmetic gives $0 \leq \sum x_i w_i \leq \sum \bar{x}_i w_i$. 
With two partitions, the choices of $\mathbb{S}_1 = \{1,2\}$ vs $\mathbb{S}_1 = \{1,3\}$ give:  
\begin{equation}
    \left[ \begin{array}{c}
x_1 + x_2 \leq \sigma 2 \\
    x_3 + x_4 \leq \sigma 2
    \end{array} \right] \mathrm{vs.}
    \left[ \begin{array}{c}
x_1 + 100 x_3 \leq \sigma 101 \\
    x_2 + 100 x_4 \leq \sigma 101
    \end{array} \right]
\end{equation}
where $\sigma$ is a binary variable. 
The constraints on the right closely approximate the $\eta$-partition (i.e., convex hull) bounds: $x_3 \leq \sigma$ and $x_4 \leq \sigma$. 
But $x_1$ and $x_2$ are relatively unaffected by a perturbation $\sigma = 1 - \delta$ (when $\sigma$ is relaxed). 
Whereas the formulation on the left constrains the four variables equivalently.
If the behavior of the node is dominated by a few inputs, the formulation on the right is strong, as it approximates the convex hull over those inputs ($z_1 \approx x_3$ and $z_2 \approx x_4$ in this case). 
For the practical case of $N << \eta$, there are likely fewer partitions than dominating variables. 

The size of the partitions can also be selected to be uneven:
\begin{equation}
    \left[ \begin{array}{c}
x_1 \leq \sigma 1 \\
    x_2 + 100x_3 + 100x_4 \leq \sigma 201
    \end{array} \right] \mathrm{vs.}
    \left[ \begin{array}{c}
    x_3 \leq \sigma 1 \\
    x_1 + x_2 + 100 x_4 \leq \sigma 102
    \end{array} \right]
\end{equation}

Similar tradeoffs are seen here: the first formulation provides the tightest bound for $x_1$, but $x_2$ is effectively unconstrained and $x_3, x_4$ approach ``equal treatment.''  
The second formulation provides the tightest bound for $x_3$, and a tight bound for $x_4$, but $x_1, x_2$ are effectively unbounded for fractional $\sigma$. 
The above analyses also apply to the case of OBBT. 
For the above example, solving a relaxed model for $\mathrm{max}(x_1 + x_2)$ obtains a bound that affects the two variables equivalently, while the same procedure for $\mathrm{max}(x_1 + 100x_3)$ obtains a bound that is much stronger for $x_3$ than for $x_1$. 
Similarly, $\mathrm{max}(x_1 + x_2)$ captures dependency between the two variables, while $\mathrm{max}(x_1 + 100x_3) \approx \mathrm{max}(100x_3)$. 

\textbf{Relaxation Tightness.} 
The partitions (and their bounds) also directly affect the tightness of the relaxation for the output variable $y$. 
The equivalent non-lifted realization \eqref{eq:nonextended} reveals the tightness of the above simple example. 
With two partitions, the choices of $\mathbb{S}_1 = \{1,2\}$ vs $\mathbb{S}_1 = \{1,3\}$ result in the equivalent non-lifted constraints:  
\begin{equation}
    \left[ \begin{array}{c}
    y \leq x_1 + 100x_3 + \sigma(b + 101) \\
    y \leq x_2 + 100x_4 + \sigma(b + 101)
    \end{array} \right] \mathrm{vs.}
    \left[ \begin{array}{c}
    y \leq x_1 + x_2 + \sigma(b + 200) \\
    y \leq 100x_3 + 100x_4 + \sigma(b + 2)
    \end{array} \right]
    \label{eq:2split}
\end{equation}
Note that combinations $\mathcal{I}_j = \emptyset$ and $\mathcal{I}_j = \{1,2,3,4\}$ in \eqref{eq:nonextended} are not analyzed here, as they correspond to the big-M/1-partition model and are unaffected by choice of partitions. 
The 4-partition model is the tightest formulation and (in addition to all possible 2-partition constraints) includes:
\begin{align}
    y &\leq x_i + \sigma(b + 201), i = 1,2 \label{eq:4split1} \\
    y &\leq 100x_i + \sigma(b + 102), i = 3,4 \label{eq:4split2}
\end{align}
The unbalanced 2-partition, i.e., the left of \eqref{eq:2split}, closely approximates two of the $4$-partition (i.e., convex hull) constraints \eqref{eq:4split2}.
Analogous to the tradeoffs in terms of bound tightness, we see that this formulation essentially neglects the dependence of $y$ on $x_1, x_2$ and instead creates the convex hull over $z_1 = x_1 + 100x_3 \approx x_3$ and $z_2 \approx x_4$. 
For this simple example, the behavior of $y$ is dominated by $x_3$ and $x_4$, and this turns out to be a relatively strong formulation. 
However, when $N << \eta$, we expect neglecting the dependence of $y$ on some input variables to weaken the model. 

The alternative formulation, i.e., the right of \eqref{eq:2split}, handles the four variables similarly and creates the convex hull in two new directions: $z_1 = x_1 + x_2$ and $z_2 = 100(x_3+x_4)$. 
While this does not model the individual effect of either $x_3$ or $x_4$ on $y$ as closely, it includes dependency between $x_3$ and $x_4$. 
Furthermore, $x_1$ and $x_2$ are modeled equivalently (i.e., less tightly than individual constraints). 
Analyzing partitions with unequal size reveals similar tradeoffs. 
This section shows that the proposed formulation has a relaxation equivalent to selecting a subset of constraints from \eqref{eq:nonextended}, with a tradeoff between modeling the effect of individual variables well vs the effect of many variables weakly. 

\textbf{Deriving Cutting Planes from Convex Hull.} 
The convex-hull constraints \eqref{eq:nonextended} not implicitly modeled by a partition-based formulation can be viewed as potential tightening constraints, or cuts. 
\citet{anderson2020strong} provide a linear-time method for selecting the most violated of the exponentially many constraints \eqref{eq:nonextended}, which is naturally compatible with our proposed formulation (some constraints will be always satisfied). 
Note that the computational expense can still be significant for practical instances; \citet{botoeva2020efficient} found adding cuts to $<$0.025\% of branch-and-bound nodes to balance their expense and tightening, while \citet{de2021scaling} proposed adding cuts at the root node only. 

\begin{remark}
While the above 4-D case may seem to favor ``unbalanced'' partitions, it is difficult to illustrate the case where $N << \eta$. 
Our experiments confirm ``balanced'' partitions perform better.
\end{remark}

\subsection{Strategies for Selecting Partitions} \label{sec:partitioningStrategies}

The above rationale motivates selecting partitions that result in a model that treats input variables (approximately) equivalently for the practical case of $N << \eta$. 
Specifically, we seek to evenly distribute tradeoffs in model tightness among inputs. 
Section \ref{sec:partitioning} suggests a reasonable approach is to select partitions such that weights in each are approximately equal (weights are fixed during optimization). 
We propose two such strategies below, as well as two strategies in \textbf{\color{red}red} for comparison. 

\textbf{\color{blue} Equal-Size Partitions.} 
One strategy is to create partitions of \textit{equal size}, i.e., $|\mathbb{S}_1| = |\mathbb{S}_2| = ... = |\mathbb{S}_N|$ (note that they may differ by up to one if $\eta$ is not a multiple of $N$).  
Indices are then assigned to partitions to keep the weights in each as close as possible. 
This is accomplished by sorting the weights $\boldsymbol{w}$ and distributing them evenly among partitions (\texttt{array\_split}(\texttt{argsort}($\boldsymbol{w}$), $N$) in Numpy). 

\textbf{\color{blue} Equal-Range Partitions.}
A second strategy is to partition with \textit{equal range}, i.e., $\underset{i \in \mathbb{S}_1}{\mathrm{range}}(w_i) = ... = \underset{i \in \mathbb{S}_N}{\mathrm{range}}(w_i)$. 
We define thresholds $\boldsymbol{v} \in \mathbb{R}^{N+1}$ such that $v_1$ and $v_{N+1}$ are $\mathrm{min}(\boldsymbol{w})$ and $\mathrm{max}(\boldsymbol{w})$. To reduce the effect of outliers, we define $v_2$ and $v_{N}$ as the 0.05 and 0.95 quantiles of $\boldsymbol{w}$, respectively. The remaining elements of $\boldsymbol{v}$ are distributed evenly in $(v_2, v_N)$. 
Indices $i \in \{1,...,\eta\}$ are then assigned to $\mathbb{S}_n: w_i \in [v_n, v_{n+1})$. 
This strategy requires $N \geq 3$, but, for a symmetrically distributed weight vector, $\boldsymbol{w}$, two partitions of equal size are also of equal range. 

We compare our proposed strategies against the following:

\textbf{\color{red}Random Partitions.} 
Input indices $\{1,...,\eta \}$ are assigned randomly to partitions $\mathbb{S}_1,...,\mathbb{S}_N$. 

\textbf{\color{red}Uneven Magnitudes.} 
Weights are sorted and ``dealt'' in reverse to partitions in snake-draft order. 

\section{Experiments}
\label{sec:results}

Computational experiments were performed using Gurobi v 9.1 \citep{gurobi}. 
The computational set-up, solver settings, and models for MNIST \cite{lecun2010mnist} and CIFAR-10 \cite{krizhevsky2009learning} are described in appendix A.4.

\subsection{Optimal Adversary Results}
The \textbf{\textit{optimal adversary}} problem takes a target image $\bar{\boldsymbol{x}}$, its correct label $j$, and an adversarial label $k$, and finds the image within a range of perturbations maximizing the difference in predictions.
Mathematically, this is formulated as $\underset{\boldsymbol{x}}{\mathrm{max}}(f_k(\boldsymbol{x}) - f_j(\boldsymbol{x})); x \in \mathcal{X}$, where $f_k$ and $f_j$ correspond to the $k$- and $j$-th elements of the NN output layer, respectively, and $\mathcal{X}$ defines the domain of perturbations. 
We consider perturbations defined by the $\ell_1$-norm ($||\boldsymbol{x} - \bar{\boldsymbol{x}}||_1 \leq \epsilon_1 \in \mathbb{R}$), which promotes sparse perturbations \cite{chen2018ead}. %to each pixels
For each dataset, we use the first 100 test-dataset images and randomly selected adversarial labels (the same 100 instances are used for models trained on the same dataset). 

\definecolor{mygray}{gray}{0.85}
\begin{table*}[!ht]
    \centering
    \caption{Number of solved (in 3600s) optimal adversary problems and average solve times for big-M vs $N=\{2, 4\}$ equal-size partitions. Average times computed for problems solved by all 3 formulations. Grey indicates partition formulations strictly outperforming big-M.}
    \begin{tabular}{c c c r r r r r r}\toprule
        Dataset & Model & $\epsilon_1$ & \multicolumn{2}{c}{Big-M} & \multicolumn{2}{c}{2 Partitions} & \multicolumn{2}{c}{4 Partitions} \\\cline{4-9}
        & & & \small solved & \small avg.time(s) & \small solved & \small avg.time(s) & \small solved & \small avg.time(s) \\ \hline
        MNIST & $2 \times 50$ & 5 &  100 &  57.6 &  100 &  \cellcolor{mygray} \textbf{42.7} &  100 &  83.9 \\
        & $2 \times 50$ & 10 & 97 & 431.8 & \cellcolor{mygray} 98 & \cellcolor{mygray} \textbf{270.4} & \cellcolor{mygray} 98 & \cellcolor{mygray} 398.4 \\     
        & $2 \times 100$ & 2.5 & 92 & 525.2 & \cellcolor{mygray} 100 & \cellcolor{mygray} \textbf{285.1} & \cellcolor{mygray} 94 & 553.9 \\ 
        & $2 \times 100$ & 5 & 38 & 1587.4 & \cellcolor{mygray} 59 & \cellcolor{mygray} \textbf{587.8} & \cellcolor{mygray} 48 & \cellcolor{mygray} 1084.7 \\ 
        & CNN1* & 0.25 & 68 & 1099.7 & \cellcolor{mygray} 86 & \cellcolor{mygray} \textbf{618.8} & \cellcolor{mygray} 87 & \cellcolor{mygray} 840.0 \\
        & CNN1* & 0.5 & 2 & 2293.2 & \cellcolor{mygray} 16 & \cellcolor{mygray} \textbf{1076.0} & \cellcolor{mygray} 11 & \cellcolor{mygray} 2161.2 \\
        \hline
        CIFAR-10 & $2 \times 100$ & 5 & 62 & 1982.3 &\cellcolor{mygray} 69 & \cellcolor{mygray} 1083.4 & \cellcolor{mygray} 85 & \cellcolor{mygray} \textbf{752.8} \\
        & $2 \times 100$ & 10 & 23 & 2319.0 & \cellcolor{mygray} 28 & \cellcolor{mygray} 1320.2 & \cellcolor{mygray} 34 & \cellcolor{mygray} \textbf{1318.1} \\
        \bottomrule
    \end{tabular} \\
    \footnotesize 
    *OBBT performed on all NN nodes
    \label{tab:optAdversary}
\end{table*}

Table \ref{tab:optAdversary} gives the optimal adversary results. Perturbations $\epsilon_1$ were selected such that some big-M problems were solvable within 3600s (problems become more difficult as $\epsilon_1$ increases). 
While our formulations consistently outperform big-M in terms of problems solved and solution times, the best choice of $N$ is problem-dependent. 
For instance, the 4-partition formulation performs best for the $2 \times 100$ network trained on CIFAR-10; Figure \ref{fig:performanceProfiles} (right) shows the number of problems solved for this case. 
The 2-partition formulation is best for easy problems, but is soon overtaken by larger values of $N$. 
Intuitively, simpler problems are solved with fewer branch-and-bound nodes and benefit more from smaller subproblems. 
Performance declines again near $N \geq 7$, supporting observations that the convex-hull formulation ($N = \eta$) is not always best \cite{anderson2020strong}. 

\textbf{Convex-Hull-Based Cuts.} 
The cut-generation strategy for big-M by \citet{anderson2020strong} is compatible with our formulations (Section \ref{sec:partitioning}), i.e., we begin with a partition-based formulation and apply cuts during optimization. 
Figure \ref{fig:performanceProfiles} (left) gives results with these cuts added (via Gurobi callbacks) at various cut frequencies. 
Specifically, a cut frequency of $1/k$ corresponds to adding the most violated cut (if any) to each NN node at every $k$ branch-and-bound nodes.  
Performance is improved at certain frequencies; however, our formulations consistently outperform even the best cases using big-M with cuts.
Moreover, cut generation is not always straightforward to integrate with off-the-shelf MILP solvers, and our formulations often perform just as well (if not better) without added cuts. 
Appendix A.5 gives results with most-violated cuts directly added to the model, as in \citet{de2021scaling}.

\begin{figure}[h]
    \centering
    \includegraphics[trim={0.3cm 0.3cm 0cm 0.3cm},clip,width=0.495\columnwidth]{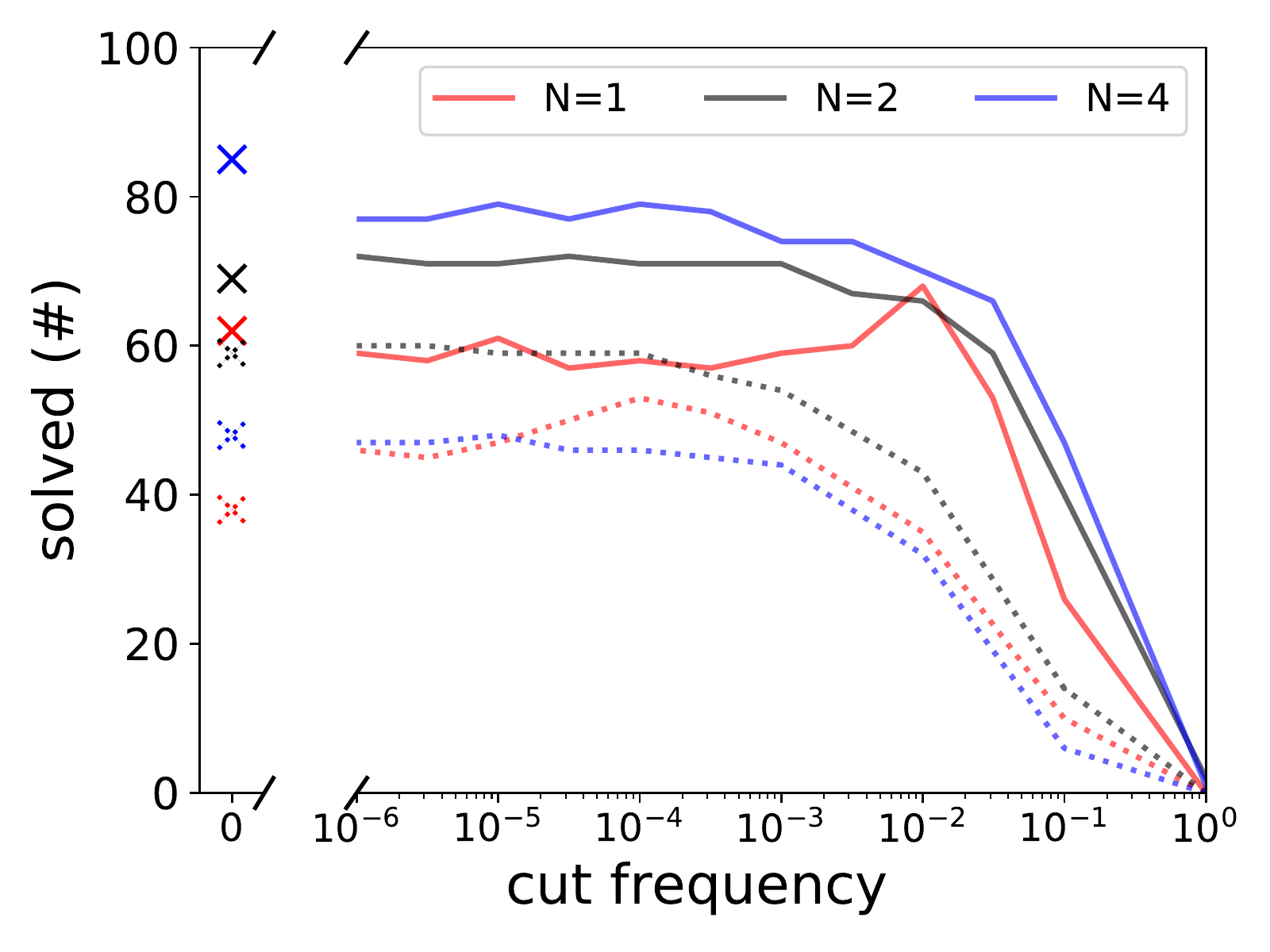}
    \includegraphics[trim={0.3cm 0.3cm 0cm 0.3cm},clip,width=0.495\columnwidth]{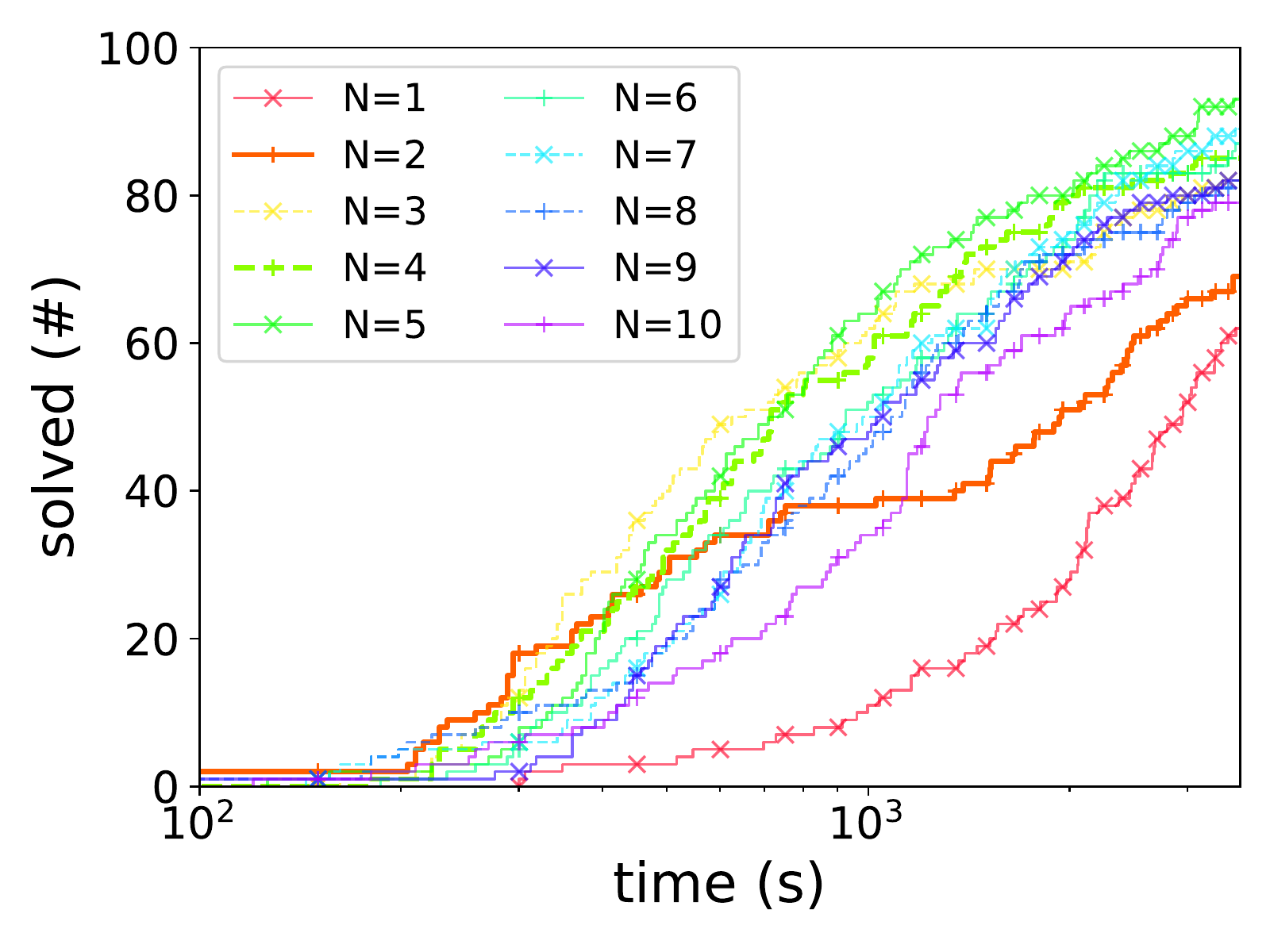}
    \caption{Number solved vs convex-hull-cut frequency/run time for optimal adversary problems ($\epsilon=5$) for $2 \times 100$ models, without OBBT.  Left: $N=1$ (equivalent to big-M) performs worst on MNIST (dotted) and CIFAR-10 (solid) models for most cut frequencies. 
    Right: each line shows 100 runs of CIFAR-10 model (no convex-hull cuts). $N=1$ performs the worst; $N=2$ performs well for easier problems; and intermediate values of $N$ balance model size and tightness well.} 
    \label{fig:performanceProfiles}
\end{figure}

\textbf{Partitioning Strategies.}
Figure \ref{fig:splitStrategies} shows the result of the input partitioning strategies from Section \ref{sec:partitioningStrategies} on the MNIST $2 \times 100$ model for varying $N$. 
Both proposed strategies ({\color{blue} blue}) outperform formulations with random and uneven partitions ({\color{red} red}). 
With OBBT, big-M ($N=1$) outperforms partition-based formulations when partitions are selected {\color{red} poorly}. 
Figure \ref{fig:splitStrategies} again shows that our formulations perform best for some intermediate $N$; random/uneven partitions worsen with increasing $N$. 

\begin{figure}[h]
    \centering
    \includegraphics[trim={0.2cm 0.2cm 1.6cm 1.4cm},clip,width=0.47\columnwidth]{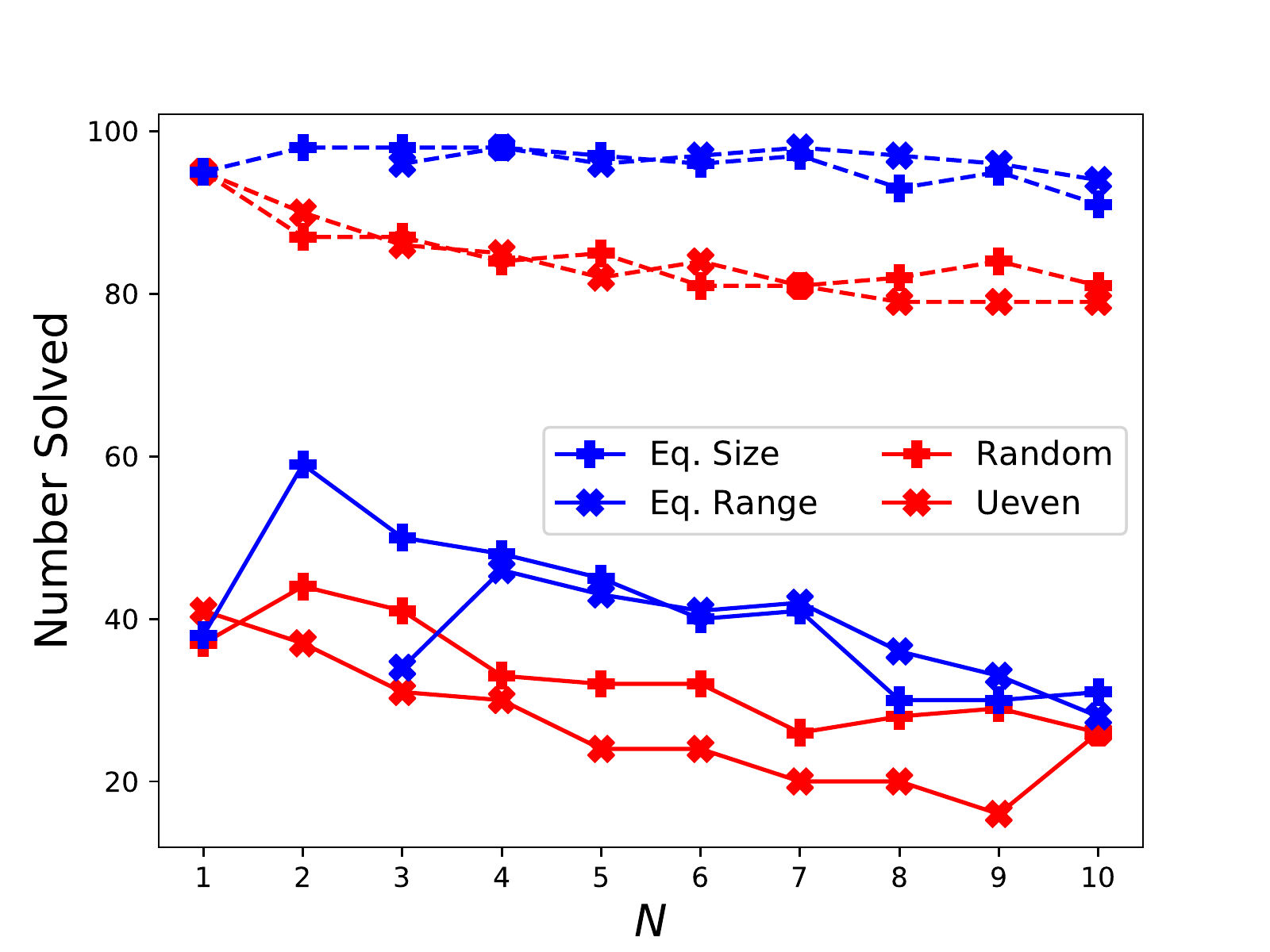}
    \hspace{10pt}
    \vrule
    \hspace{4pt}
    \includegraphics[trim={0.2cm 0.2cm 1.6cm 1.2cm},clip, width=0.47\columnwidth]{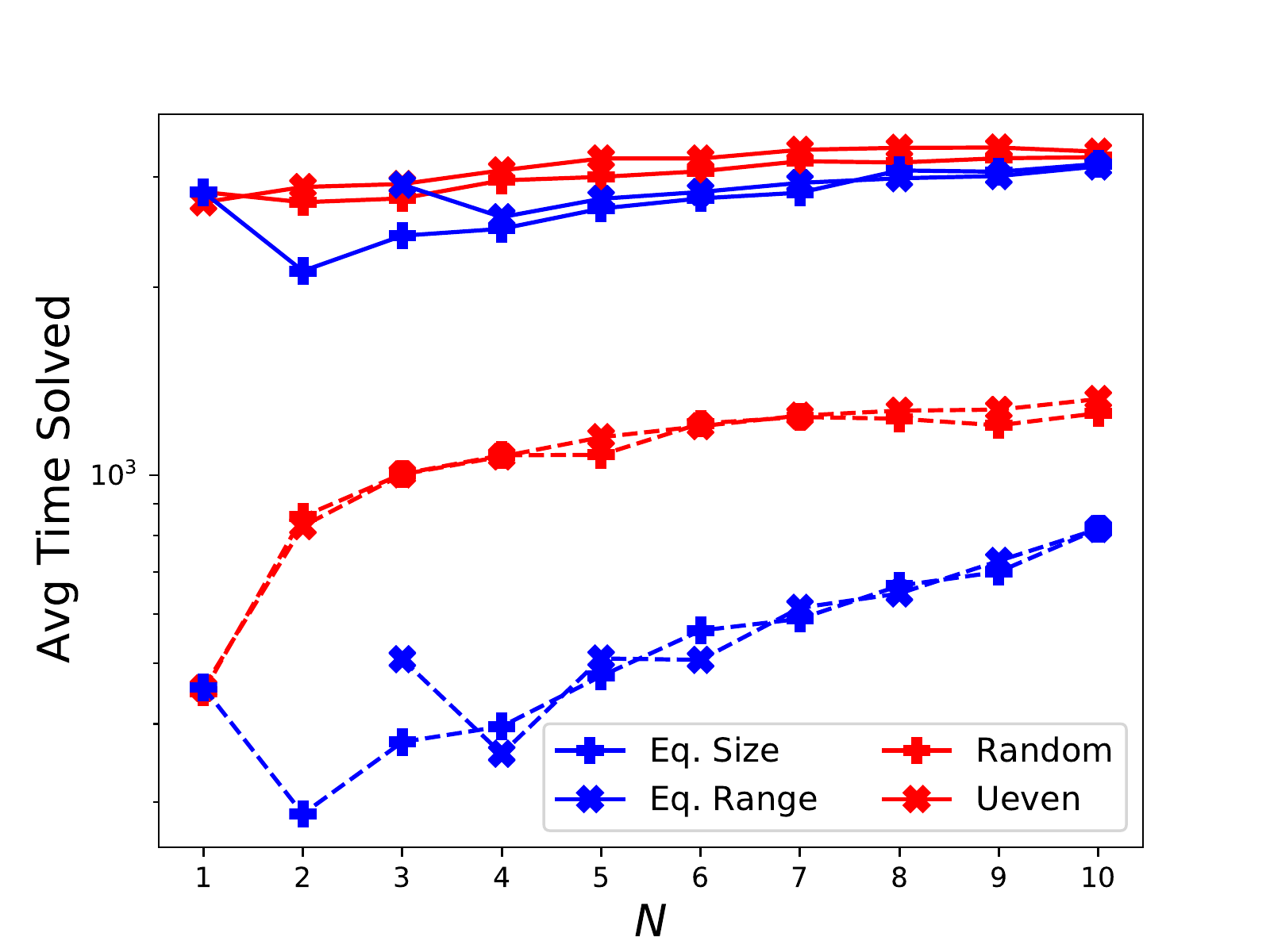}
    \caption{Number solved (left) and solution times (right) for optimal adversary problems for MNIST $2 \times 100$ ($\epsilon=5$). Each point shows 100 runs, max time of 3600s. Dashed lines show runs with OBBT. The equal range strategy requires $N \geq 3 $. Our proposed ({\color{blue} blue}) partitioning strategies solve more problems (top) faster (bottom) than random and uneven partitions ({\color{red} red})}
    \label{fig:splitStrategies}
\end{figure}

\textbf{Optimization-Based Bounds Tightening.}
OBBT was implemented by tightening bounds for all $z_n^b$. 
We found that adding the bounds from the 1-partition model (i.e., bounds on $\sum z_n^b$) improved all models, as they account for dependencies among all inputs. 
Therefore these bounds were used in all models, resulting in $2N + 2$ LPs per node ($N \geq 2$). 
We limited the OBBT LPs to 5s; interval bounds were used if an LP was not solved.
Figure \ref{fig:splitStrategies} shows that OBBT greatly improves the optimization performance of all formulations. 
OBBT problems for each layer are independent and could, in practice, be solved in parallel. 
Therefore, at a minimum, OBBT requires the sum of max solution times found in each layer (5s $\times$ \# layers in this case). 
This represents an avenue to significantly improve MILP optimization of NNs via parallelization. 
In contrast, parallelizing branch-and-bound is known to be challenging and may have limited benefits \cite{achterberg2013mixed, ralphs2018parallel}. 

\begin{table*}[!ht]
    \centering
    \caption{Number of verification problems solved in 3600s and average solve times for big-M vs $N= \{2, \, 4 \}$ equal-size partitions. Average times computed for problems solved by all 3 formulations. OBBT was performed for all problems. Grey indicates formulations strictly outperforming big-M.}
    \begin{tabular}{c c c r r r r r r} \toprule
        Dataset & Model & $\epsilon_\infty$ & \multicolumn{2}{c}{Big-M} & \multicolumn{2}{c}{2 Partitions} & \multicolumn{2}{c}{4 Partitions} \\\cline{4-9}
        & & & \small sol.(\#) & \small avg.time(s) & \small sol.(\#) & \small avg.time(s) & \small sol.(\#) & \small avg.time(s) \\ \hline
        %MNIST & $3 \times 200$ & & & & & & & \\ 
        MNIST & CNN1 & 0.050 & 82 & 198.5 & \cellcolor{mygray} 92 & \cellcolor{mygray} \textbf{27.3} & \cellcolor{mygray} 90 & \cellcolor{mygray} 52.4 \\
        & CNN1 & 0.075 & 30 & 632.5 & \cellcolor{mygray} 52 & \cellcolor{mygray} \textbf{139.6} & \cellcolor{mygray} 42 & \cellcolor{mygray} 281.6 \\
        & CNN2 & 0.075 & 21 & 667.1 & \cellcolor{mygray} 36 & \cellcolor{mygray} \textbf{160.7} & \cellcolor{mygray} 31 & \cellcolor{mygray} 306.0 \\
        & CNN2 & 0.100 & 1 & 505.3 & \cellcolor{mygray} 5 & \cellcolor{mygray} \textbf{134.7} & \cellcolor{mygray} 5 & \cellcolor{mygray} 246.3\\
        \hline 
        CIFAR-10 & CNN1 & 0.007 & 99 & 100.6 & \cellcolor{mygray} 100 & \cellcolor{mygray} \textbf{25.9} & 99 & \cellcolor{mygray}45.4 \\
        & CNN1 & 0.010 & 98 & 119.1 & \cellcolor{mygray} 100 & \cellcolor{mygray} \textbf{25.7} & \cellcolor{mygray}100 &\cellcolor{mygray} 45.0 \\
        & CNN2 & 0.007 & 80 & 300.5 & \cellcolor{mygray} 95 & \cellcolor{mygray} \textbf{85.1} & 68 & 928.6 \\
        & CNN2 & 0.010 & 40 & 743.6 & \cellcolor{mygray} 72 & \cellcolor{mygray} \textbf{176.4} & 35 & 1041.3 \\ 
        \bottomrule
    \end{tabular} \\ \vspace{2pt}
    \footnotesize 
    \label{tab:verification}
\end{table*}

\subsection{Verification Results}
The \textbf{\textit{verification}} problem is similar to the optimal adversary problem, but terminates when the sign of the objective function is known (i.e., the lower/upper bounds of the MILP have the same sign). 
This problem is typically solved for perturbations defined by the $\ell_\infty$-norm ($||\boldsymbol{x} - \bar{\boldsymbol{x}}||_\infty \leq \epsilon_\infty \in \mathbb{R}$).
Here, problems are difficult for moderate $\epsilon_\infty$: at large $\epsilon_\infty$ a mis-classified example (positive objective) is easily found.
Several verification tools rely on an underlying big-M formulation, e.g., MIPVerify \cite{tjeng2017evaluating}, NSVerify \cite{akintunde2018reachability}, making big-M an especially relevant point of comparison. 
Owing to the early termination, larger NNs can be tested compared to the optimal adversary problems. 
We turned off cuts (\texttt{cuts}$=0$) for the partition-based formulations, as the models are relatively tight over the box-domain perturbations and do not seem to benefit from additional cuts. 
On the other hand, removing cuts improved some problems using big-M and worsened others. 
Results are presented in Table \ref{tab:verification}.
Our formulations again generally outperform big-M ($N$=1), except for a few of the 4-partition problems. 

\begin{table*}[ht]
    \centering
    \caption{Number of $\ell_1$-minimally distorted adversary problems solved in 3600s and average solve times for big-M vs $N= \{2, \, 4 \}$ equal-size partitions. Average times and $\epsilon_1$ are computed for problems solved by all 3 formulations. OBBT was performed for all problems. Grey indicates partition formulations strictly outperforming big-M.}
    \begin{tabular}{c c c r r r r r r} \toprule
        Dataset & Model & $\mathrm{avg}(\epsilon_1)$ & \multicolumn{2}{c}{Big-M} & \multicolumn{2}{c}{2 Partitions} & \multicolumn{2}{c}{4 Partitions} \\\cline{4-9}
        & & & \small solved & \small avg.time(s) & \small solved & \small avg.time(s) & \small solved & \small avg.time(s) \\ \hline
        MNIST & $2 \times 50$ &6.51 &52 &675.0 & \cellcolor{mygray} 93 & \cellcolor{mygray} \textbf{150.9} & \cellcolor{mygray} 89 & \cellcolor{mygray} 166.6 \\ 
        &$2 \times 75$ &4.41 &16 &547.3 & \cellcolor{mygray} 37 & \cellcolor{mygray} \textbf{310.5} & \cellcolor{mygray} 31 & \cellcolor{mygray} 424.0 \\ 
        & $2 \times 100$ &2.73 &7 &710.8 & \cellcolor{mygray} 13 & \cellcolor{mygray}\textbf{572.9} & \cellcolor{mygray} 10 & 777.9 \\
        \bottomrule
    \end{tabular} \\
    \footnotesize 
    \label{tab:minAdversary}
\end{table*}

\subsection{Minimally Distorted Adversary Results}
In a similar vein as \citet{croce2020minimally}, we define the \textit{\textbf{$\ell_1$-minimally distorted adversary}} problem: given a target image $\bar{\boldsymbol{x}}$ and its correct label $j$, find the smallest perturbation over which the NN predicts an adversarial label $k$. 
We formulate this as $\underset{\epsilon_1, \boldsymbol{x}}{\mathrm{min}}\ \epsilon_1; ||\boldsymbol{x} - \bar{\boldsymbol{x}}||_1 \leq \epsilon_1; f_k(\boldsymbol{x}) \geq f_j(\boldsymbol{x})$. 
The adversarial label $k$ is selected as the second-likeliest class of the target image. Figure \ref{fig:advers_examples} provides examples illustrating that the $\ell_1$-norm promotes sparse perturbations, unlike the $\ell_\infty$-norm. 
Note that the sizes of perturbations $\epsilon_1$ are dependent on input dimensionality; if it were distributed evenly, $\epsilon_1=5$ would correspond to an $\ell_\infty$-norm perturbation $\epsilon_\infty \approx 0.006$ for MNIST models. 
\begin{figure}[!h]
    \centering
    \begin{tikzpicture}
    \node at (0,0) {\includegraphics[trim={3.8cm 1.5cm 3.8cm 1.55cm},clip,width=0.12\columnwidth]{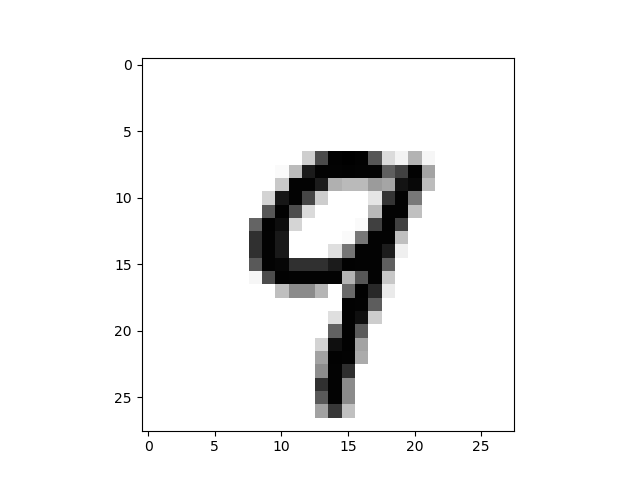}};
    \node[above,text width=2cm,align=center] at (0.1,-1.45){\footnotesize Target};
    \node at (2,0) {\includegraphics[trim={3.8cm 1.5cm 3.8cm 1.55cm},clip,width=0.12\columnwidth]{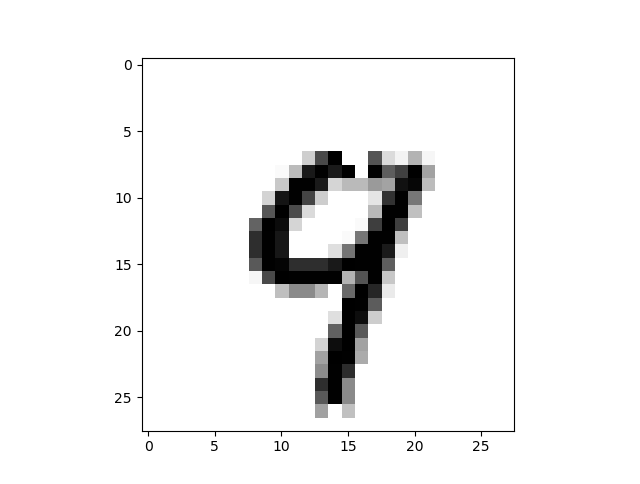}};
    \node[above,text width=2cm,align=center] at (2.1,-1.4){\footnotesize $\epsilon_1=4$};
    \node at (4,0) {\includegraphics[trim={3.8cm 1.5cm 3.8cm 1.55cm},clip,width=0.12\columnwidth]{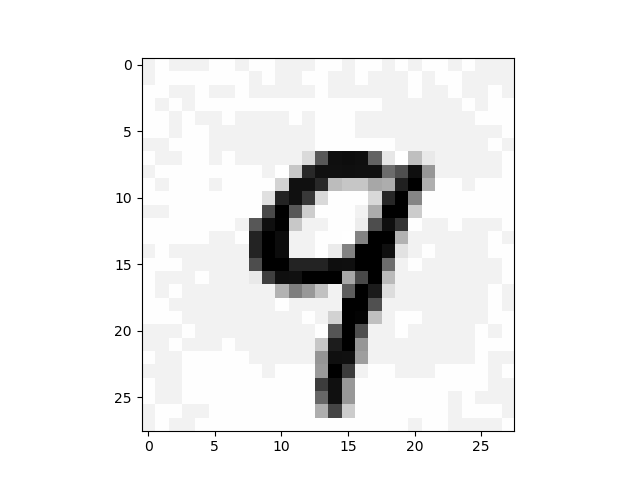}};
    \node[above,text width=2cm,align=center] at (4.1,-1.4){\footnotesize $\epsilon_\infty=0.05$};
\end{tikzpicture}
    \caption{Sample $\ell_1$- vs $\ell_\infty$-based minimally distorted adversaries for the MNIST $2 \times 50$ model. The true label ($j$) is `9,' and the adversarial label ($k$) is `4.' }
    \label{fig:advers_examples}
    %\vspace{-8pt}
\end{figure}

Table \ref{tab:minAdversary} presents results for the minimally distorted adversary problems. 
As input domains are unbounded, these problems are considerably more difficult than the above \textit{optimal adversary} problems. 
Therefore, only smaller MNIST networks were manageable (with OBBT) for all formulations. 
Again, partition-based formulations consistently outperform big-M, solving more problems and in less time.

\section{Conclusions}
This work presented MILP formulations for ReLU NNs that balance having a tight relaxation and manageable size. 
The approach forms the convex hull over partitions of node inputs; we presented theoretical and computational motivations for obtaining good partitions for ReLU nodes.  
Furthermore, our approach expands the benefits of OBBT, which, unlike conventional MILP tools, can easily be parallelized. 
Results on three classes of optimization tasks show that the proposed formulations consistently outperform standard MILP encodings, allowing us to solve 25\% more of the problems (average $>$2X speedup for solved problems). 
Implementations of the proposed formulations and partitioning strategies are available at \url{https://github.com/cog-imperial/PartitionedFormulations\_NN}.

\section*{Acknowledgments}
This work was supported by Engineering \& Physical Sciences Research Council (EPSRC) Fellowships to CT and RM (grants EP/T001577/1 and EP/P016871/1), an Imperial College Research Fellowship to CT, a Royal Society Newton International Fellowship (NIF\textbackslash R1\textbackslash 182194) to JK, a grant by the Swedish Cultural Foundation in Finland to JK, and a PhD studentship funded by BASF to AT.

\newpage
\bibliography{neurips}
\bibliographystyle{plainnat}

\newpage
\appendix

\section{Appendix}

\subsection{Derivation of Proposed Formulation}
Given the disjunction:
\begin{align}
    \left[ \begin{array}{c} y = 0 \\ \sum_{n=1}^N z_n + b \leq 0 \end{array} \right] \lor
    \left[ \begin{array}{c} y = \sum_{n=1}^N z_n + b \\ y \leq 0 \end{array} \right]
\end{align}
The extended, or ``multiple choice,'' formulation for disjunctions \cite{Balas2018} introduces auxiliary variables $z_n^a$ and $z_n^b$ for each $z_n$:
{\allowdisplaybreaks
\begin{align}
    &\sum_{i \in \mathbb{S}_n} w_i x_i = z_n^a + z_n^b \label{eq:Pextended1} \\
    &\sum_n z_n^a + \sigma b \leq 0 \\
    &\sum_n z_n^b + (1-\sigma) b \geq 0 \\
    &y = \sum_n z_n^b + (1-\sigma)b \\
    &\sigma \mathit{LB}_n^a \leq z_n^a \leq \sigma \mathit{UB}_n^a, &\forall n = 1,...,N\\ 
    &(1-\sigma) \mathit{LB}_n^b \leq z_n^b \leq (1-\sigma) \mathit{UB}_n^b, &\forall n = 1,...,N \label{eq:Pextended_1end}
\end{align}}
where again $\sigma \in \{0,1\}$. 
We observe that \eqref{eq:Pextended1}--\eqref{eq:Pextended_1end} \textit{exactly represents the ReLU node} described by \eqref{eq:disjunctive2}: substituting $z_n = \sum_{i \in \mathbb{S}_n} w_i x_i$ in the extended convex hull formulation \cite{Balas2018} of the original disjunction \eqref{eq:reluDP}, directly gives \eqref{eq:Pextended1}--\eqref{eq:Pextended_1end}.
Eliminating $z_n^a$ via  \eqref{eq:Pextended1} results in our proposed formulation:
\begin{align}
    &\sum_n \left( \sum_{i \in \mathbb{S}_n} w_i x_i - z_n^b \right) + \sigma b \leq 0 \label{Aeq:Psplit1} \\
    &\sum_n z_n^b + (1-\sigma) b \geq 0 \\
    &y = \sum_n z_n^b + (1-\sigma)b \label{Aeq:PsplitY} \\
    &\sigma \mathit{LB}_n^a \leq \sum_{i \in \mathbb{S}_n} w_i x_i  -  z_n^b \leq \sigma \mathit{UB}_n^a, &\forall n = 1,...,N \label{Aeq:Psplitbound}\\ 
    &(1-\sigma) \mathit{LB}_n^b \leq z_n^b \leq (1-\sigma) \mathit{UB}_n^b, &\forall n = 1,...,N \label{Aeq:Psplit2}
\end{align}

\subsection{Derivation of Equivalent Non-Extended Formulation}
We seek an equivalent (non-lifted) formulation to \eqref{Aeq:Psplit1}--\eqref{Aeq:Psplit2} without $z_n^b$. 
The term $\sum_n z_n^b$ can be isolated in \eqref{Aeq:Psplit1}--\eqref{Aeq:PsplitY}, giving:
\begin{align}
    \sum_n z_n^b &\geq \boldsymbol{w}^T\boldsymbol{x} + \sigma b \label{A2eq:1} \\
    \sum_n z_n^b &\geq -(1-\sigma) b \label{A2eq:2} \\
    \sum_n z_n^b &= y - (1-\sigma) b \label{A2eq:3}
\end{align}

We observed in Section 3.1 that $[\mathrm{min}(\sum_{i \in \mathbb{S}_n} w_i x_i)$, $\mathrm{max}(\sum_{i \in \mathbb{S}_n} w_i x_i)]$ are valid for both $z_n^a$ and $z_n^b$. 
Let $\mathit{UB}_i$ and $\mathit{LB}_i$ denote, respectively, valid upper and lower bounds of weighted input $w_i x_i$. 
Isolating $z_n^b$ in the bounds \eqref{Aeq:Psplitbound}--\eqref{Aeq:Psplit2} gives:
\begin{align}
    z_n^b &\leq \sum_{i \in \mathbb{S}_n} w_i x_i - \sigma \sum_{i\in\mathbb{S}_n} \mathit{LB}_i \label{A2eq:4} \\
    z_n^b &\geq \sum_{i \in \mathbb{S}_n} w_i x_i - \sigma \sum_{i\in\mathbb{S}_n} \mathit{UB}_i \label{A2eq:5} \\
    z_n^b &\leq (1-\sigma) \sum_{i\in\mathbb{S}_n} \mathit{UB}_i \label{A2eq:6} \\
    z_n^b &\geq (1-\sigma) \sum_{i\in\mathbb{S}_n} \mathit{LB}_i \label{A2eq:7}
\end{align}

\textbf{Fourier-Motzkin Elimination.} 
\definecolor{darkgreen}{RGB}{50, 130, 50}
We will now eliminate the auxiliary variables using the above inequalities. Equations that appear in the non-extended formulation are marked in {\color{darkgreen} green}.
First, we examine the equations containing $\sum_n z_n^b$. 
Combining \eqref{A2eq:1}+\eqref{A2eq:3} and \eqref{A2eq:2}+\eqref{A2eq:3} gives, respectively:
\begin{align}
    \color{darkgreen}y &\color{darkgreen}\geq \boldsymbol{w}^T \boldsymbol{x} + b \\
    \color{darkgreen}y &\color{darkgreen}\geq 0
\end{align}

Secondly, we examine equations containing only $z_n^b$. 
Combining \eqref{A2eq:4}+\eqref{A2eq:5} gives the trivial constraint $\sum_{i \in \mathbb{S}_n} UB_i \geq \sum_{i \in \mathbb{S}_n} LB_i$.
Combining \eqref{A2eq:6}+\eqref{A2eq:7} gives the same constraint. 
The other combinations \eqref{A2eq:4}+\eqref{A2eq:7} and \eqref{A2eq:5}+\eqref{A2eq:6} recover the definition of lower and upper bounds on the partitions:
\begin{align}
    \sum_{i \in \mathbb{S}_n} \mathit{LB}_i &\leq \sum_{i \in \mathbb{S}_n} w_i x_i \leq
    \sum_{i \in \mathbb{S}_n} \mathit{UB}_i
\end{align}

Now examining both equations containing $z_n^b$ and those containing $\sum_n z_n^b$, the combinations between \eqref{A2eq:3} and \eqref{A2eq:4}--\eqref{A2eq:7} are most interesting. Here, each $z_n^b$ can be eliminated using either inequality \eqref{A2eq:4} or \eqref{A2eq:6}. 
Note that combining these with \eqref{A2eq:1} or \eqref{A2eq:2} results in trivially redundant constraints. 
Defining $\mathcal{A}$ as the indices of partitions for which \eqref{A2eq:4} is used, and $\mathcal{B}$ as the remaining indices:
\begin{align}
    y - (1-\sigma)b \leq \sum_{n\in\mathcal{A}} \sum_{i\in\mathcal{S}_n} w_i x_i - \sigma \sum_{n\in\mathcal{A}} \sum_{i\in\mathcal{S}_n} \mathit{LB}_i + (1-\sigma) \sum_{n\in\mathcal{B}} \sum_{i\in\mathcal{S}_n} UB_i \label{A2eq:8}
\end{align}
Similarly, the inequalities of opposite sign can be chosen from either \eqref{A2eq:5} or \eqref{A2eq:7}. 
Defining now $\mathcal{A}'$ as the indices of partitions for which \eqref{A2eq:5} is used, and $\mathcal{B}'$ as the remaining indices:
\begin{align}
    y - (1-\sigma)b \geq \sum_{n\in\mathcal{A}'} \sum_{i\in\mathcal{S}_n} w_i x_i - \sigma \sum_{n\in\mathcal{A}'} \sum_{i\in\mathcal{S}_n} \mathit{UB}_i + (1-\sigma) \sum_{n\in\mathcal{B}} \sum_{i\in\mathcal{S}_n} LB_i \label{A2eq:9}
\end{align}

Defining $\mathcal{I}_j$ as the union of $\mathbb{S}_n \forall n \in \mathcal{A}$ and $\mathcal{I}_j'$ as the union of $\mathbb{S}_n \forall n \in \mathcal{A}'$, \eqref{A2eq:8}--\eqref{A2eq:9} become:
\begin{align}
    y &\leq \sum_{i \in \mathcal{I}_j} w_i x_i - \sigma \sum_{i \in \mathcal{I}_j} \mathit{LB}_i + (1-\sigma)(b + \sum_{i \in \mathcal{I} \setminus \mathcal{I}_j} \mathit{UB}_i) \label{A2eq:10} \\
    y &\geq \sum_{i \in \mathcal{I}_j'} w_i x_i - \sigma \sum_{i \in \mathcal{I}_j'} \mathit{UB}_i + (1-\sigma)(b + \sum_{i \in \mathcal{I} \setminus \mathcal{I}_j'} \mathit{LB}_i) \label{A2eq:11}
\end{align}

The lower inequality \eqref{A2eq:11} is redundant. Consider that $\sigma \in \{0,1\}$. 
For $\sigma=0$ and $\sigma=1$, we recover:
\begin{align}
    y &\geq \sum_{i \in \mathcal{I}_j'} w_i x_i + (b + \sum_{i \in \mathcal{I} \setminus \mathcal{I}_j'} \mathit{LB}_i) \\
    y &\geq \sum_{i \in \mathcal{I}_j'}( w_i x_i - \mathit{UB}_i) 
\end{align}
The former is less tight than $y \geq \boldsymbol{w}^T \boldsymbol{x} + b$, while the latter is less tight than $y \geq 0$. 
Finally, setting $\sigma' = 1-\sigma$ in \eqref{A2eq:10} gives:
\begin{align}
    \color{darkgreen}y &\color{darkgreen}\leq \sum_{i \in \mathcal{I}_j} w_i x_i + (\sigma'-1) \sum_{i \in \mathcal{I}_j} \mathit{LB}_i + \sigma'(b + \sum_{i \in \mathcal{I} \setminus \mathcal{I}_j} \mathit{UB}_i)
\end{align}

The combination $\mathcal{I}_j = \emptyset$ removes all $x_i$, giving an upper bound for $y$:
\begin{align}
    \color{darkgreen}y \leq \sigma'(b + \sum_{i\in\mathcal{I}} \mathit{UB}_i)
\end{align}

Combining all {\color{darkgreen} remaining equations} gives a nonextended formulation:
\begin{align}
    y &\leq \sum_{i \in \mathcal{I}_j}w_i x_i + \sigma' (b  +  \sum_{i \in \mathcal{I} \setminus \mathcal{I}_j} \mathit{UB}_i) + (\sigma'-1)(\sum_{i \in \mathcal{I}_j}\mathit{LB}_i) \\
    y &\geq \boldsymbol{w}^T \boldsymbol{x} + b \\
    y &\leq \sigma' \mathit{UB}^0 \\
    y &\in [0, \infty)
\end{align}

\subsection{Equivalence of $\eta$-Partition Formulation to Convex Hull}
When $N=\eta$, it follows that $\mathbb{S}_n = \{n\}, 
\forall n=1,..,\eta$, and, consequently, $z_n = w_n x_n$. 
The auxiliary variables can be expressed in terms of $x_n$, rather of $z_n$ (i.e., $z_n^a = w_n x_n^a$ and $z_n^b = w_n x_n^b$). 
Rewriting \eqref{eq:Pextended1}--\eqref{eq:Pextended_1end} in this way gives: 
\begin{align}
    &x_n = x_n^a + x_n^b  \label{eq:hull1} \\
    &\boldsymbol{w}^T \boldsymbol{x}^a + \sigma b \leq 0 \\
    &\boldsymbol{w}^T \boldsymbol{x}^b + (1-\sigma) b \geq 0 \\
    &y = \boldsymbol{w}^T \boldsymbol{x}^b + (1-\sigma) b \\
    &\sigma \frac{\mathit{LB}_n^a}{w_n} \leq x_n^a \leq \sigma \frac{\mathit{UB}_n^a}{w_n}, &\forall n = 1,...,\eta \label{eq:hull1_2}\\ 
    &(1-\sigma) \frac{\mathit{LB}_n^b}{w_n} \leq x_n^b \leq (1-\sigma) \frac{\mathit{UB}_n^b}{w_n}, &\forall n = 1,...,\eta \label{eq:hull2}
\end{align}
This formulation with a copy of each input---sometimes referred to as a ``multiple choice'' formulation---represents the convex hull of the node, e.g., see \cite{anderson2020strong}; however, the overall model tightness still hinges on the bounds used for $x_n, n=1,...,\eta$. 
We note that \eqref{eq:hull1_2}--\eqref{eq:hull2} are presented here with $x_n^a$ and $x_n^b$ isolated for clarity. In practice, the bounds can be written without $w_n$ in their denominators, as in \eqref{eq:Psplit1}--\eqref{eq:Psplit2}. 

\subsection{Experiment Details}
\textbf{Computational Set-Up.}
All computational experiments were performed on a 3.2 GHz Intel Core i7-8700 CPU (12 threads) with 16 GB memory. Models were implemented and solved using Gurobi v 9.1 \citep{gurobi} (academic license). 
The LP algorithm was set to dual simplex, \texttt{cuts} $=1$ (moderate cut generation), \texttt{TimeLimit} $=3600s$, and default termination criteria were used.
We set parameter \texttt{MIPFocus} $=3$ to ensure consistent solution approaches across formulations. 
Average times were computed as the arithmetic mean of solve times for instances solved by all formulations for a particular problem class. 
Thus, no time outs are included in the calculation of average solve times. 

\textbf{Neural Networks.} 
We trained several NNs on MNIST \cite{lecun2010mnist} and CIFAR-10 \cite{krizhevsky2009learning}, including both fully connected NNs and convolutional NNs (CNNs). MNIST (CC BY-SA 3.0) comprises a training set of 60,000 images and a test set of 10,000 images. CIFAR-10 (MIT License) comprises a training set of 50,000 images and a test set of 10,000 images. 
Dense models are denoted by $n_\mathrm{Layers} \times n_\mathrm{Hidden}$ and comprise $n_\mathrm{Layers} \times n_\mathrm{Hidden}$ hidden plus $10$ output nodes. CNN2 is based on `ConvSmall' of the ERAN dataset \cite{eran}: 
\{\texttt{Conv2D}(16, (4,4), (2,2)), \texttt{Conv2D}(32, (4,4), (2,2)), \texttt{Dense}(100), \texttt{Dense}(10)\}. 
CNN1 halves the channels in each convolutional layer:  \{\texttt{Conv2D}(8, (4,4), (2,2)), \texttt{Conv2D}(16, (4,4), (2,2)), \texttt{Dense}(100), \texttt{Dense}(10)\}.
The implementations of CNN1/CNN2 have 1,852/3,604 and 2,476/4,852 nodes for MNIST and CIFAR-10, respectively. 
NNs are implemented in PyTorch \cite{NEURIPS2019_9015} and obtained using standard training (i.e., without regularization or methods to improve robustness). 
MNIST models were trained for ten epochs, and CIFAR-10 models were trained for 20 epochs, all using the \texttt{Adadelta} optimizer with default parameters. 

\subsection{Additional Computational Results}
\textbf{Comparing formulations.}
Proposition 1 suggests an equivalent, non-lifted formulation for each partition-based formulation. 
In contrast to our proposed formulation \eqref{eq:Psplit1}--\eqref{eq:Psplit2}, which adds a linear (\textit{w.r.t.}\ number of partitions $N$) number of variables, this \textit{non-lifted} formulation involves adding an exponential (\textit{w.r.t.}\ $N$) number of constraints. 
Table \ref{tab:comparison} computationally compares our proposed formulations against non-lifted formulations with equivalent relaxations on 100 optimal adversary instances of medium difficulty. 
These results clearly show the advantages of the proposed formulations; a non-lifted formulation equivalent to 20-partitions (2$^{20}$ constraints per node) cannot even be fully generated on our test machines due to its computational burden. 
Additionally, our formulations naturally enable OBBT on partitions. 
As described in Section 3.1, this captures dependencies among each partition without additional constraints. 

\begin{table}[!h]
    \centering
    \caption{Number of solved (in 3600s) optimal adversary problems for $N=\{2, 5, 10, 20\}$ equal-size partitions for MNIST 2$\times$100 model, $\epsilon_1$=5.
    Performance is compared between proposed formulations and non-lifted formulations with equivalent relaxations.}
    \label{tab:comparison}
    \begin{tabular}{c r r r r} \toprule
    Partitions & \multicolumn{2}{c}{Proposed Formulation} & \multicolumn{2}{c}{Non-Lifted Formulation} \\ \cline{2-5}
    $N$ & \small solved & \small avg.time(s) & \small solved & \small avg.time(s) \\ \hline
    2 & 59 & 530.1 & 41 & 1111.4 \\
    5 & 45 & 792.5 & 7 & 2329.8 \\
    10 & 31 & - & 0 & - \\
    20 & 22 & - & 0* & - \\ \bottomrule
    \end{tabular} \\
    \footnotesize
    *The non-lifted formulation cannot be generated on our test machines.
\end{table}

As suggested in Proposition 1, the convex hull for a ReLU node involves an exponential (in terms of number of inputs) number of non-lifted constraints:
{\allowdisplaybreaks
\begin{align}
    y &\leq \sum_{i \in \mathcal{I}_j}w_i x_i + \sigma (b  +  \sum_{i \in \mathcal{I} \setminus \mathcal{I}_j} \mathit{UB}_i) + (\sigma-1)(\sum_{i \in \mathcal{I}_j}\mathit{LB}_i), &\forall j=1,...,2^\eta \label{eq:nonextended_hull} 
\end{align}}
where $\mathit{UB}_i$ and $\mathit{LB}_i$ denote the upper and lower bounds of $w_i x_i$. 
The set $\mathcal{I}$ denotes the input indices $\{1,...,\eta \}$, and the subset $\mathcal{I}_j$ contains the union of the $j$-th combination of partitions $\{\mathbb{S}_1,...,\mathbb{S}_\eta \}$. 

\citet{anderson2020strong} provide a linear-time method for identifying the most-violated constraint in \eqref{eq:nonextended_hull}. 
First, the subset $\hat{\mathcal{I}}$ is defined:
\begin{align}
    \hat{\mathcal{I}} = \Big\{ i \in \mathcal{I}\ |\ w_i x_i < w_i \big( \mathit{LB}_{x_i}(1-\sigma) + \mathit{UB}_{x_i}\sigma \big) \Big\}
\end{align}
where $\mathit{LB}_{x_i}$ and $\mathit{UB}_{x_i}$ are, respectively, the lower and upper bounds of input $x_i$. 
Then, the constraint in \eqref{eq:nonextended_hull} corresponding to the subset $\hat{\mathcal{I}}$ is checked:
{\allowdisplaybreaks
\begin{align}
    y &\leq \sum_{i \in \hat{\mathcal{I}}}w_i x_i + \sigma (b  +  \sum_{i \in \mathcal{I} \setminus \hat{\mathcal{I}}} \mathit{UB}_i) + (\sigma-1)(\sum_{i \in \hat{\mathcal{I}}}\mathit{LB}_i) \label{eq:mostviolated}
\end{align}}

If this constraint \eqref{eq:mostviolated} is violated, then it is the most violated in the family \eqref{eq:nonextended_hull}. If it is not, then no constraints in \eqref{eq:nonextended_hull} are violated. 
This method can be used to solve the separation problem and generate cutting planes during optimization. 

Figure \ref{fig:performanceProfiles} (left) shows that dynamically generated, convex-hull-based cuts only improve optimization performance when added at relatively low frequencies, if at all. 
Alternatively, \citet{de2021scaling} only add the most violated cut (if one exists) at the initial LP solution to each node in the NN. 
Adding the cut before solving the MILP may produce different performance than dynamic cut generation: the \citet{de2021scaling} approach to adding cuts includes the cut in the solver (Gurobi) pre-processing.
Tables \ref{tab:optAdversary_onecut}--\ref{tab:verification_onecut2} give the results of the optimal adversary problem (cf. Table \ref{tab:optAdversary}) with these single ``most-violated'' cuts added to each node. 
These cuts sometimes improve the big-M performance, but partition-based formulations still consistently perform best. 

\definecolor{mygray}{gray}{0.85}
\begin{table*}[!h]
    \centering
    \caption{Number of solved (in 3600s) optimal adversary problems for big-M vs $N=\{2, 4\}$ equal-size partitions. Columns marked ``w/cuts'' denote most violated convex-hull cut at root MILP node added to each NN node. Most solved for each set of problems is in bold. {\color{darkgreen}Green} text indicates more solved compared to no convex-hull cuts.}
    \begin{tabular}{c c c r r r r r r}\toprule
        Dataset & Model & $\epsilon_1$ & \multicolumn{2}{c}{Big-M} & \multicolumn{2}{c}{2 Partitions} & \multicolumn{2}{c}{4 Partitions} \\\cline{4-9}
        & & & \small w/ cuts & \small w/o cuts & \small w/ cuts & \small w/o cuts & \small w/ cuts & \small w/o cuts \\ \hline
        MNIST & $2 \times 50$ & 5 & \textbf{100} & \textbf{100} & \textbf{100} & \textbf{100} & \textbf{100} & \textbf{100} \\
        & $2 \times 50$ & 10 & \color{darkgreen}\textbf{98} & 97 & \textbf{98} & \textbf{98} & \textbf{98} & \textbf{98} \\     
        & $2 \times 100$ & 2.5 & \color{darkgreen}95 & 92 & \textbf{100} & \textbf{100} & \color{darkgreen}97 & 94 \\ 
        & $2 \times 100$ & 5 & \color{darkgreen}51 & 38 & 55 & \textbf{59} & 45 & 48 \\ 
        & CNN1* & 0.25 & \color{darkgreen}\textbf{91} & 68 & 85 & 86 & 83 & 87 \\
        & CNN1* & 0.5 & \color{darkgreen} 7 & 2 & \color{darkgreen}\textbf{18} & 16 & 9 & 11 \\
        \hline
        CIFAR-10 & $2 \times 100$ & 5 & 45 & 62 & 65 & 69 & 61 & \textbf{85} \\
        & $2 \times 100$ & 10 & 11 & 23 & \color{darkgreen}30 & 28 & 26 & \textbf{34} \\
        \bottomrule
    \end{tabular} \\
    \footnotesize 
    *OBBT performed on all NN nodes
    \label{tab:optAdversary_onecut}
\end{table*}

\definecolor{mygray}{gray}{0.85}
\begin{table*}[!h]
    \centering
    \caption{Average solve times of optimal adversary problems for big-M vs $N=\{2, 4\}$ equal-size partitions. Columns marked ``w/cuts'' denote most violated convex-hull cut at root MILP node added to each NN node. Average times computed for problems solved by all 6 formulations. Lowest average time for each set of problems is in bold. {\color{darkgreen}Green} text indicates lower time compared to no convex-hull cuts.}
    \begin{tabular}{c c c r r r r r r}\toprule
        Dataset & Model & $\epsilon_\infty$ & \multicolumn{2}{c}{Big-M} & \multicolumn{2}{c}{2 Partitions} & \multicolumn{2}{c}{4 Partitions} \\\cline{4-9}
        & & & \small w/ cuts & \small w/o cuts & \small w/ cuts & \small w/o cuts & \small w/ cuts & \small w/o cuts \\ \hline
        MNIST & $2 \times 50$ & 5 & \color{darkgreen}45.3 & 57.6 & 60.3 & \textbf{42.7} & 86.0 & 83.9 \\
        & $2 \times 50$ & 10 & \color{darkgreen}285.2 & 431.8 & 305.0 & \textbf{270.4} & 404.5 & 398.4 \\     
        & $2 \times 100$ & 2.5 & \color{darkgreen}505.2 & 525.2 & 309.3 & \textbf{285.1} & 597.4 & 553.9\\ 
        & $2 \times 100$ & 5 & \color{darkgreen} 930.5 & 1586.0 & 691.7 & \textbf{536.9} & 1131.4 & 1040.3 \\ 
        & CNN1* & 0.25 & \color{darkgreen}\textbf{420.7} & 1067.9 & 567.2 & 609.4 & 784.6 & 817.1 \\
        & CNN1* & 0.5 & \color{darkgreen}1318.2 & 2293.2 & 1586.7 & \textbf{1076.0} & \color{darkgreen} 1100.9 & 2161.2 \\
        \hline
        CIFAR-10 & $2 \times 100$ & 5 & \color{darkgreen}1739.2 & 1914.6 & \color{darkgreen}595.1 & 1192.4 & 834.8 & \textbf{538.4} \\
        & $2 \times 100$ & 10 & \color{darkgreen}1883.3 & 1908.9 & \color{darkgreen}\textbf{1573.4} & 1621.4 & 1767.44 & 2094.5 \\
        \bottomrule
    \end{tabular} \\
    \footnotesize 
    *OBBT performed on all NN nodes
    \label{tab:optAdversary_onecut2}
\end{table*}

\definecolor{mygray}{gray}{0.85}
\begin{table*}[!ht]
    \centering
    \caption{Number of solved (in 3600s) verification problems for big-M vs $N=\{2, 4\}$ equal-size partitions. OBBT was performed for all partitions, and columns marked ``w/cuts'' denote most violated convex-hull cut at root MILP node added to each NN node.. Most solved for each set of problems is in bold. {\color{darkgreen}Green} text indicates more solved compared to no convex-hull cuts.}
    \begin{tabular}{c c c r r r r r r}\toprule
        Dataset & Model & $\epsilon_\infty$ & \multicolumn{2}{c}{Big-M} & \multicolumn{2}{c}{2 Partitions} & \multicolumn{2}{c}{4 Partitions} \\\cline{4-9}
        & & & \small w/ cuts & \small w/o cuts & \small w/ cuts & \small w/o cuts & \small w/ cuts & \small w/o cuts \\ \hline
        MNIST & CNN1 & 0.050 & 78 & 82 & 91 & \textbf{92} & 89 & 90 \\
        & CNN1 & 0.075 & 25 & 30 & \color{darkgreen}\textbf{53} & 52 & \color{darkgreen}45 & 42 \\
        & CNN2 & 0.075 & 16 & 21 & 34 & \textbf{36} & 11 & 31 \\
        & CNN2 & 0.100 & 1 & 1 & \textbf{5} & \textbf{5} & 0 & 5 \\
        \hline
        CIFAR-10 & CNN1 & 0.007 & 98 & 99 & \textbf{100} & \textbf{100} & 99 & 99 \\
        & CNN1 & 0.010 & 80 & 98 & 94 & \textbf{100} & 89 & \textbf{100} \\
        & CNN2 & 0.007 & 78 & 80 & 94 & \textbf{95} & \color{darkgreen}73 & 68 \\
        & CNN2 & 0.010 & 29 & 40 & \color{darkgreen}\textbf{74} & 72 & 34 & 35 \\
        \bottomrule
    \end{tabular} \\
    \footnotesize 
    \label{tab:verification_onecut}
\end{table*}

\definecolor{mygray}{gray}{0.85}
\begin{table*}[!h]
    \centering
    \caption{Average solve times of verification problems for big-M vs $N=\{2, 4\}$ equal-size partitions. OBBT was performed for all partitions, and columns marked ``w/cuts'' denote most violated convex-hull cut at root MILP node added to each NN node. Average times computed for problems solved by all 6 formulations. Lowest average time for each set of problems is in bold. {\color{darkgreen}Green} text indicates lower time compared to no convex-hull cuts.}
    \begin{tabular}{c c c r r r r r r}\toprule
        Dataset & Model & $\epsilon_1$ & \multicolumn{2}{c}{Big-M} & \multicolumn{2}{c}{2 Partitions} & \multicolumn{2}{c}{4 Partitions} \\\cline{4-9}
        & & & \small w/ cuts & \small w/o cuts & \small w/ cuts & \small w/o cuts & \small w/ cuts & \small w/o cuts \\ \hline
        MNIST & CNN1 & 0.050 & 133.7 & 92.2 & 20.4 & \textbf{19.3} & 37.2 & 33.3 \\
        & CNN1 & 0.075 & 504.7 & 239.0 & 155.8 & \textbf{84.4} & 178.2 & 161.2 \\
        & CNN2 & 0.075 & 637.8 & 414.8 & 101.9 & \textbf{95.4} & 901.3 & 175.1 \\
        & CNN2 & 0.100 & - & - & - & - & - & - \\
        \hline
        CIFAR-10 & CNN1 & 0.007 & 102.5 & 74.7 & \color{darkgreen}\textbf{18.7} & 21.4 & \color{darkgreen} 30.6 & 38.1 \\
        & CNN1 & 0.010 & 599.4 & 66.5 & 68.5 & \textbf{19.1} & 137.1 & 33.9 \\
        & CNN2 & 0.007 & 308.3 & 262.7 & \color{darkgreen}\textbf{69.5} & 74.5 & \color{darkgreen}380.9 & 812.2 \\
        & CNN2 & 0.010 & 525.9 & 394.2 & \color{darkgreen}\textbf{100.6} & 147.4 & 657.0 & 828.2 \\
        \bottomrule
    \end{tabular} \\
    \footnotesize 
    *OBBT performed on all NN nodes
    \label{tab:verification_onecut2}
\end{table*}

\end{document}